\theoremstyle{plain}
\newtheorem{theorem}{Theorem}
\newtheorem{lemma}{Lemma}
\newtheorem{prop}{Proposition}
\theoremstyle{remark}
\newtheorem{remark}{Remark}
\newcommand{\overbar}[1]{\mkern 4mu\overline{\mkern-4mu#1\mkern-4mu}\mkern 4mu}
\newcommand{\Sup}{M/M/n\text{-Sup}(d)}
\newcommand{\R}{\mathbb{R}}
\newcommand{\Rbar}{\overbar{\R}}
\newcommand{\E}{\mathbb{E}}
\newcommand{\N}{\mathbb{N}}
\newcommand{\vn}[1]{\left|\left|#1\right|\right|}
\newcommand{\st}{\text{ s.t. }}
\newcommand{\bo}[1]{O\left(#1\right)}
\newcommand{\PP}{\mathbb{P}}
\newcommand{\defeq}{\,{\buildrel \triangle \over =}\,}
\newcommand{\1}{\mathbbm{1}}
\newcommand{\bounded}[1]{\mathcal{Z}^{#1}}
\newcommand{\region}[1]{Z^{\alpha}_{#1}}
\title{Supermarket Queueing System in the Heavy Traffic Regime.\\ 
Short Queue Dynamics}
\author{Patrick Eschenfeldt and David Gamarnik}
\begin{document}

\maketitle

\begin{abstract}
We consider a queueing system with $n$ parallel queues operating according to
the so-called ``supermarket model'' in which arriving customers join the
shortest of $d$ randomly selected queues. Assuming rate $n\lambda_{n}$ Poisson
arrivals and rate $1$ exponentially distributed service times, we consider this
model in the heavy traffic regime, described by $\lambda_{n}\uparrow 1$ as
$n\to\infty$. We give a simple expectation argument establishing that majority
of queues have steady state length at least $\log_d(1-\lambda_{n})^{-1} - O(1)$
with probability approaching one as $n\rightarrow\infty$, implying the same for
the steady state delay of a typical customer.

Our main result concerns the detailed behavior of queues with length 
\emph{smaller}
than $\log_d(1-\lambda_{n})^{-1}-O(1)$. Assuming $\lambda_{n}$ converges to $1$
at rate at most $\sqrt{n}$, we show that the dynamics of such queues
does not follow a diffusion process, as is typical for queueing systems in
heavy traffic, but is described instead by a deterministic infinite system of
linear differential equations, after an appropriate rescaling.
The unique fixed point solution of this system is shown explicitly to be of the
form $\pi_{1}(d^{i}-1)/(d-1), i\ge 1$, which we conjecture describes the
steady state behavior of the queue lengths after the same rescaling.
Our result is obtained by combination of several technical ideas including 
establishing the existence and uniqueness of an associated
infinite dimensional system of non-linear integral equations and adopting an
appropriate stopped process as an intermediate step. 
\end{abstract}

\section{Introduction.}\label{sec:intro}
In this paper we consider the so-called supermarket model in the heavy traffic
regime. The supermarket model
is a parallel server queueing system consisting of $n$ identical servers which
process jobs at rate $1$ Poisson process.
The jobs arrive into the system according to a Poisson process with rate
$n\lambda_{n}$ where $\lambda_{n}$ is assumed to be
strictly smaller than unity for stability. A positive integer parameter $d$ is
fixed. Each arriving customer chooses
$d$ servers uniformly at random and selects a server with the smallest number
of jobs in the corresponding queue, ties
broken uniformly at random. The queue within each server is processed according
to the First-In-First-Out rule. 
We denote this system by $\Sup$.

The foundational work on this model was done by Dobrushin, Karpelevich and
Vvedenskaya~\cite{vvedenskaya} and Mitzenmacher~\cite{mitzenmacher},
who independently showed that when $\lambda_{n}=\lambda<1$ is a fixed constant
and $d\ge 2$, the steady state probability that the customer
encounters a queue with length at least $t$ (and hence experiences the delay at
least $t$ in expectation), 
is of the form $\lambda^{d^{t}}$. 
Namely it is doubly exponential in $t$. This is in sharp contrast with the case
$d=1$, where each server behaves as an $M/M/1$ system
with load $\lambda$ and hence the steady state delay has the exponential tail
of the form $\lambda^{t}$. This phenomena has its static
counterpart in the form of so-called Balls-Into-Bins model. In this model $n$
balls are thrown sequentially into $n$ bins where for each ball $d$ bins
are chosen uniformly at random and the bin with the smallest number of balls is
chosen. It is well known that for this model the largest
bin has $O(\log\log n)$ balls when $d\ge 2$ as opposed to $O(\log n)$ balls
when $d=1$. This known as ''Power-of-Two'' phenomena.

The development in~\cite{vvedenskaya} and \cite{mitzenmacher}
is based on the fluid limit approximations for the infinite dimensional
process, where each coordinate corresponds to the fraction of servers
with at least $i$ jobs. By taking $n$ to infinity, it is shown that the
limiting system can be described by a deterministic infinite system 
of differential equations, which have a unique and simple to describe fixed
point satisfying doubly exponential decay rate.
Some of the subsequent work that has been performed on the supermarket model
and its
variations can be found in
\cite{mukherjee2015universality, mukherjee2016efficient, bramson-2010, bramson-2012, bramson-2013, graham, luczak-mcdiarmid, luczak-norris, Mitzenmacher1999, vvedenskaya-97, li, dieker-suk}.

In this paper we consider the supermarket model in the heavy traffic regime
described by having the arrival rate parameter
$\lambda_{n}\uparrow 1$.
The work of Brightwell and Luczak~\cite{luczak} 
considers the model which is the closest to the one considered in this paper.
They also assume that $\lambda_{n}\uparrow 1$, but
at the same time they assume that the parameter $d$ diverges to infinity as
well. In our setting $d$ remains constant as is the case for the
classical supermarket model. More precisely, we assume that as $n$ increases,
$d$ is fixed, but
$\lambda_{n}=1-\beta/\eta_{n}$ where $\beta>0$ is fixed and
$\lim_{n}\eta_{n}=\infty$.
Our goal is conducting the performance analysis of the system both at the
process level and in steady state. Unfortunately,
the fluid limit approach of~\cite{vvedenskaya} and \cite{mitzenmacher} is
rendered useless since in this case the corresponding
fluid limit  trivializes to a system of differential equations describing the
critical system corresponding to $\lambda_{n}=\lambda=1$.
At the same time, however, certain educated guesses can be inferred from the
case when $\lambda<1$ is constant, namely the classical setting.
From the $\lambda^{d^{i}}$ tail behavior which describes
the fraction of servers with at least $i$ customers in steady state, it can be
inferred that when $\lambda\uparrow 1$,
if $i^{*}=o\left(\log_{d}{1\over 1-\lambda}\right)$ then
\begin{align*}
\lambda^{d^{i^{*}}}=\lambda^{o\left({1\over 1-\lambda}\right)},
\end{align*}
which approaches unity as $\lambda\uparrow 1$. Namely, the fraction of servers
with at most $i^{*}$ customers becomes negligible. Of course this does
not apply rigorously to our heavy traffic regime as it amounts to first taking
the limit in $n$ and only then taking the limit in $\lambda$,
whereas in the heavy traffic regime this is done simultaneously. Nevertheless,
our main results confirm this behavior both at the process level
and in the steady state regime. In terms of our notation for $\lambda_{n}$, we 
show that when $\omega_{n}$ is an arbitrary sequence
diverging to infinity and
\begin{align}\label{eq:i-n-star}
i^{*}_{n}=\log_{d}{1\over
1-\lambda_{n}}-\omega_{n}=\log_{d}\eta_{n}-\log_{d}\beta-\omega_{n},
\end{align}
(note that the term $\log_{d}\beta$ is subsumed by $\omega_{n}$)
the fraction of queues with at most $i^{*}_{n}$ customers is $o(1)$ with
probability approaching unity as $n$
increases.
The intuitive explanation for this is as above:
\begin{align*}
\lambda_{n}^{d^{i^{*}_{n}}}&=\left(1-\beta/\eta_{n}\right)^{d^{-\omega_n}
\over 1-\lambda_{n}} \\
&=\left(1-\beta/\eta_{n}\right)^{\eta_{n}\over\beta d^{\omega_{n}}} \\
&\approx e^{-{1\over d^{\omega_{n}}}} \\
&\rightarrow 1,
\end{align*}
as $n\rightarrow\infty$.

We now describe our results and our approach at some level of detail. First we
give a very simple expectation based argument
showing that in steady state the expected fraction of servers with at least $i$
customers is at least 
$1 - {\beta \over \eta_n}{d^i -1\over d-1}$. Plugging here the value for
$i_{n}^{*}$ given in (\ref{eq:i-n-star}) 
the expression becomes $1-1/d^{\omega_{n}}\rightarrow 1$,
as $n\rightarrow\infty$, confirming the claimed behavior in steady state. This
immediately implies that the steady state
delay experienced by a typical customer is at least $i^{*}_{n}$ with
probability approaching $1$ as $n$ diverges.
This result is formally stated in Theorem~\ref{thm:steady-state-expectation}.

Our main result concerns the detailed process level behavior of queues, with
the eye towards queues of 
length at most $i_{n}^{*}$. As is customary in the heavy traffic
theory, the first step is applying an appropriate rescaling step, 
and thus for each $i\le i_{n}^{*}$, letting $S^{n}_{i}(t)$ denote the fraction
of servers with at most $i$
jobs at time $t$, we introduce a rescaled process $T_{i}^{n}(t)\triangleq
\eta_{n}(1-S^{n}_{i}(t))$. We prove that the sequence
$T^{n}=\left(T_{i}^{n}(t), t\ge 0, i\ge 1\right)$ converges weakly to some
deterministic limiting process $T=\left(T_{i}(t), t\ge 0,i\ge 1\right)$,
provided that the system starts in a state where $T_{i}^{n}(0)$ has a
non-trivial limit as $n\rightarrow\infty$, and provided
$\eta_{n}$ grows at most order $\sqrt{n}$.
We show that the process
$T(t)$ is the unique solution to a deterministic infinite system of linear
differential equations (given by (\ref{eq:result-i}) in the body of the paper).

This is the main technical result of the paper. This result is perhaps somewhat
surprising since  the processes arising as heavy traffic limits
of queueing systems is usually a diffusion, and not a deterministic process as
it is in our case.
We further show that the unique fixed point of the process $T(t)=(T_{i}(t),
i\ge 0)$ is of the
form 
$\pi_{i}=\pi_{1}(d^{i}-1)/(d-1)$, consistent with our result regarding the
lower bound on steady state expectation of $S_{i}^{n}$ discussed above.
We also show that this fixed point is an attraction point of the process $T(t)$
and the convergence occurs exponentially fast.

Our main result regarding the weak convergence of the rescaled process $T^{n}$
to $T$ is obtained by employing several technical steps. The first
step is writing the term
$\left(S_{i-1}^{n}\right)^{d}-\left(S_{i}^{n}\right)^{d}=(1-T_{i-1}^{n}/\eta_{n
})^{d}-(1-T_{i}^{n}/\eta_{n})^{d}$ 
(which corresponds to the likelihood that the arriving job increases the
fraction
of servers with at least $i$ jobs), as a sum of a linear function
$dT_{i}^{n}/\eta_{n}-dT_{i-1}^{n}/\eta_{n}$ plus the correction
term $d g^{\eta_{n}}/\eta_{n}$, where
$g^{\eta_{n}}$ is the appropriate correction function, and then showing that
this correction has a smaller order of
magnitude provided $i\le i_{n}^{*}$. 
Then, we prove the existence, uniqueness and continuity property of the
stochastic integral equation governing the behavior of the rescaled 
queue length counting process
$\left(T_{i}^{n}, t\ge 0, i\ge 1\right)$, up to an appropriately chosen
stopping time intended to prevent $T_{i}^{n}$ from ``growing too much''.
The stopping time utilized in this theorem is similar to the one employed by
the authors in a different paper~\cite{eschenfeldt}. Finally,
we apply the martingale method by splitting the underlying stochastic processes
into one part which is a martingale and the compensating
part which has a non-trivial drift. It is then shown that the martingale part
is zero in the limit as $n\rightarrow\infty$,
thanks to the nature of the underlying rescaling.

The remainder of the paper is laid out as follows: Section \ref{sec:setup} 
defines the model and states our main results. 
In Section \ref{sec:steady-state} we prove our results regarding the steady
state regime and prove results 
regarding the properties of the limiting deterministic process $T(t)$.  
In Section
\ref{sec:integral} we will prove Theorem \ref{thm:const-integral} regarding the
existence, uniqueness and the continuity 
property of an infinite dimensional stochastic integral equation system
governing the behavior of the rescaled process $T^{n}(t)$.
In Section \ref{sec:martingales} we construct a representation
of the system as a combination of martingales and integral terms. Section
\ref{sec:martingale-convergence} will establish that these martingales converge
to zero. This section will also include the conclusion of
the proof of Theorem \ref{thm:const-result}. Open questions and conjectures are
discussed in Section~\ref{sec:Conclusions}.

We close this section with some notational conventions. 
We use $\Rightarrow$ to denote weak convergence.
$\R (\R_{+})$ denotes the set of (non-negative) real values. 
$\R_{\ge 1}$ denotes the set of real values greater than or equal to one.
$\Rbar_+ = \R_+ \cup \{\infty\}$ denotes the extended non-negative real line.
$\Rbar_{\ge 1} = \R_{\ge 1} \cup \{\infty\}$ denotes the extended non-negative
real line excluding reals strictly less than $1$.
We equip $\Rbar_{\ge 1}$ with the order topology, in which neighborhoods
of $\infty$ are those sets which contain a subset of the form $\{x > a\}$ for
some $a \in \R$.
Let $\R^\infty$ be the space of sequences $x = (x_0,x_1,x_2,\ldots)$. For $x
\in \R^\infty$ and $\rho > 1$ we define the norm
\[\vn{x}_\rho \defeq \sum_{i \ge 0}\rho^{-i}\left|x_i\right|,\]
where $\vn{x}_\rho = \infty$ is a possibility, and we define the subspace
\[\R^{\infty,\rho} \defeq 
    \left\{x \in \R^\infty \st \vn{x}_{\rho} < \infty\right\}.\] 
Note that while $\vn{\cdot}_\rho$ does not induce the standard product topology on $\R^\infty$,
we will use $\R^\infty$ and $\R^{\infty,\rho}$ with the product topology 
unless noted otherwise.
Let $D_t = D([0,t],\R)$ be the space of cadlag functions from $[0,t]$ to $\R$. 
Let $D^\infty_t = D([0,t],\R^\infty)$ be the space of cadlag functions from
$[0,t]$
to $\R^\infty$.
For $x \in D_t$ we denote the uniform norm
\[\vn{x}_t = \sup_{0 \le s \le t} |x(s)|.\]
For $x \in D^\infty_t$ and $\rho > 1$, we define the $\rho$-norm by
\[\vn{x}_{\rho,t} = \sum_{i \ge 0}\rho^{-i}\vn{x_i}_t,\]
where $\vn{x}_{\rho,t} = \infty$ is a possibility and we define the subspace
\[D^{\infty,\rho}_t \defeq 
    \left\{x \in D^\infty_t \st \vn{x}_{\rho,t} < \infty\right\}.\] 
Observe that while $\vn{\cdot}_{\rho,t}$ does not induce the standard product topology on
$D^{\infty}_t$, we will use $D^\infty_t$ and $D^{\infty,\rho}_t$ with 
the product topology unless noted otherwise.
For $\eta \in \R_+$, let
$[-\eta, \eta]^\infty = \{x \in \R^\infty \st \forall \text{ } i \ge 0,
 |x_i| \le \eta\}.$
Let $D^{\eta}_t = D\left([0,t],[-\eta,\eta]^\infty\right)$ 
be the space of cadlag functions from $[0,t]$ to
$[-\eta,\eta]^\infty$. $D^{\eta}_t$ is also 
equipped with the standard product topology. 
For notational convenience, for $\rho > 1$ and $\eta \in \R_+$ 
we define $D^{\eta,\rho}_t \defeq D^{\eta}_t$.

\section{The model and the main result.}\label{sec:setup}
We consider the supermarket model with $n$ exponential rate one servers each
with their own queue, and Poisson arrivals with rate $\lambda_n n$ for
$\lambda_n < 1$ such that $\lambda_n \uparrow 1$. Specifically, we assume
that there exists some sequence $\eta_n$ and constant $\beta > 0$ such that
$\eta_n \to \infty$ as $n \to \infty$ and
\begin{equation}\label{eq:lambda-rate}
    \lim_{n \to \infty} \eta_n(1-\lambda_n) = \beta.
\end{equation}
We assume $\eta_n \ge 1$ for all $n \ge 1$.
Upon arrival customers select $d \ge 2$ queues uniformly at random 
with replacement and join the shortest of
these queues, with ties broken uniformly at random.

Let $0 \le S_i^n(t) \le 1$ be the fraction of queues with at least $i$
customers (including the customer in service) at time $t$. 
Then the probability that
an arriving customer at time $t$ joins a queue of length exactly $i-1$ is 
\[(S_{i-1}^n(t))^d - (S_i^n(t))^d.\]
As a result, because the overall arrival rate is $\lambda_n n$, the
instantaneous rate of  arrivals to queues of length exactly $i-1$ is
\[\lambda_n n \left((S_{i-1}^n(t))^d - (S_i^n(t))^d\right).\]
Note that an arrival to a queue of length $i-1$ increases $S_i^n$ by $1/n$, and
all other types of arrivals leave $S_i^n$ unchanged. Similarly, a departure
from a queue of length $i$ decreases $S_i^n$ by $1/n$ and any other departures
leave $S_i^n$ unchanged.
The instantaneous rate of departures from queues of length exactly $i$
at time $t$ is
\[nS_i^n(t) - nS_{i+1}^n(t).\]
For $i \ge 1$ let $A_i$ and $D_i$ be independent rate-1 Poisson processes.
Then we can represent the processes $S_i^n$ via random time
changes of these Poisson processes. Specifically, we have
\begin{align}
S_0^n(t) &= 1, \label{eq:s-form-0} \\
S_i^n(t) &= S_i^n(0) + {1 \over n}A_i\left(\lambda_n n
\int_0^t\left(\left(S_{i-1}^n(s)\right)^d-\left(S_i^n(s)\right)^d\right
)ds\right) \notag \\
&\quad\quad - {1 \over n}D_i\left(n \int_0^t\left(S_{i}^n(s) -
S_{i+1}^n(s)\right)ds\right), & i \ge 1. \label{eq:s-form-i}
\end{align}
We let
\[T_i^n = \eta_n(1 - S_i^n),\]
where $\eta_n$ is defined by \eqref{eq:lambda-rate}.
Observe that
\[0 \le T_i^n \le \eta_n.\] 
For technical reasons we restrict our choice of $\eta_n$ to those for which
there exists constant $Q \ge 0$ such that
\begin{equation}\label{eq:eta-rate}
    \eta_n \le Q\sqrt{n},
\end{equation}
for all $n \ge 1$.
That is, we assume $\eta_n = \bo{\sqrt{n}}$. 
We further define $\eta_\infty = \infty$.
We will prove in Theorem
\ref{thm:const-result} that under appropriate conditions,
 $T^n = (T_0^n,T_1^n,\ldots)$  weakly converges to
the solution to a certain integral equation, which we first prove in Theorem
\ref{thm:const-integral} has a unique solution.

For every $\eta \in \R_{\ge 1}$ and $x \in \R$, we let
\begin{align}
g^\eta(x) &\defeq {\eta \over d}\left(1-{x\over\eta}\right)^d - {\eta
\over d} + x
\label{eq:g^n}\\
&= {1 \over d} \sum_{l = 2}^d \binom{d}{l}(-1)^l{x^l \over \eta^{l-1}}, \notag
\end{align}
and also let $g^\infty = 0$ for $\eta = \infty$.

Given $b \in \R^{\infty,\rho}$, $y \in D^{\infty,\rho}_t$, 
$\lambda \in \R$, and $\eta \in \Rbar_{\ge 1}$,
consider the following system of integral equations:
\begin{align}
T_0(t) &= 0,  \label{eq:integral-0}\\
T_i(t) &=  b_i + y_i - \lambda d\int_0^t\left(T_i(s) - T_{i-1}(s) -
g^\eta(T_i(s)) +
g^\eta(T_{i-1}(s)) \right)ds \label{eq:integral-i} \\
&\quad\quad + \int_0^t\left(T_{i+1}(s) - T_i(s)\right)ds, \quad\quad i \ge 1.
\notag
\end{align}
An important special case of this system, which will appear as the limiting
system in Theorem \ref{thm:const-result} below, is when we set $\eta = \infty$
and $\lambda = 1$, and $y = 0$. In this case $b_i = T_i(0)$ for $i \ge 1$.
This system is as follows:
\begin{align}
T_0(t) &= 0, \label{eq:result-0}\\
T_i(t) &= T_i(0) - d\int_0^t\left(T_i(s) - T_{i-1}(s)\right)ds \notag\\
&\quad\quad+
\int_0^t\left(T_{i+1}(s) - T_i(s)\right)ds, & i \ge 1 \label{eq:result-i}.
\end{align}
For any $\eta \in \Rbar_{\ge 1}$, $0 < \alpha < 1/2$, and $\rho > 1$, 
let $i^* = {\alpha \over 2} \log_\rho \eta$ where 
$\log_\rho \infty = \infty$. 
In fact, for our purposes, $\alpha / 2$ can be replaced by any positive number 
strictly smaller than $\alpha$.
Define a stopping time $t^* \in \Rbar_+$ as follows:
\begin{align}
t^* \defeq \inf\bigg\{ s \ge 0 : 
&\exists\text{ } i \st 1 \le i \le i^* \text{ and } |T _i(s)| \ge
\eta^{\alpha} \notag
\\
&\text{ or } \label{eq:stopping-time-full} \\
&\exists\text{ } i \st i > i^* \text{ and } |T_i(s)| \ge \eta + 1 
 \notag  \bigg\}.
\notag
\end{align}
We also define a subset of $\R^{\infty,\rho}$ related to this stopping time.
 Let
\begin{align*}
\bounded{\eta} \defeq \bigg\{x \in \R^{\infty,\rho} \st &\forall \text{ } 1
\le i \le
i^*, \quad |x_i| \le \eta^{\alpha} \\
&\text{ and }  \\
&\forall \text{ } i > i^*, \quad |x_i| \le \eta + 1\bigg\}.
\end{align*}
Finally, we define a subset of the product space 
$\R^{\infty,\rho} \times D^{\infty,\rho}_t \times \R \times \Rbar_{\ge 1}$
equipped with the product topology
 which will allow us to limit our attention to
certain parameter values. Specifically, let
\begin{align*}
\region{K} \defeq \bigg\{(b,y,\lambda,\eta) \in \R^{\infty,\rho} \times 
D^{\infty,\rho}_t \times \R \times \Rbar_{\ge 1} \st 
&b + y(0) \in \bounded{\eta}, \\
\vn{b}_{\rho} &\le K, \vn{y}_{\rho,t} \le K\bigg\}.
\end{align*}
Observe that for $(b,y,\lambda,\eta)$ which are not in $\region{K}$ 
for any $K > 0$, we have $t^* = 0$, and that $\region{K}$ is a closed subset
of 
$\R^{\infty,\rho} \times D^{\infty,\rho}_t \times \R \times \Rbar_{\ge 1}$.

Our first result shows that
\eqref{eq:integral-0}-\eqref{eq:integral-i} has a unique solution
on the interval $[0,t^*]$ and that it defines a map which satisfies a certain 
continuity property.

\begin{theorem}\label{thm:const-integral}
For $t \in [0,t^*]$, the system \eqref{eq:integral-0}-\eqref{eq:integral-i}
has a unique solution $T = (T_i, i \ge 0) \in D^{\infty,\rho}_t$. 
For any $t \ge 0$, defining
\begin{align}
\hat{T}(t) = 
    \begin{cases}
        T(t) & t < t^* \\ T(t^*) & t \ge t^*,
    \end{cases} \label{eq:integral-stopped}
\end{align}
we obtain a function $f : \R^{\infty,\rho} \times D^{\infty,\rho}_t 
\times \R \times \Rbar_{\ge 1} \to D^{\infty,\rho}_t$ 
mapping $(b,y,\lambda,\eta)$ to $\hat{T} = f(b,y,\lambda,\eta)$. 
Moreover, when the domain is restricted to $\region{K}$ for any $K > 0$ equipped with the product
topology, $f$ is continuous for every $t \ge 0$.
\end{theorem}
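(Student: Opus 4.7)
The plan is to read \eqref{eq:integral-0}-\eqref{eq:integral-i} as a fixed-point equation in $D^{\infty,\rho}_t$, construct the solution by Picard iteration up to the stopping time $t^*$, and then derive continuity of $f$ from a Gr\"onwall-type stability estimate. The design of the stopping time $t^*$ ensures that every coordinate $T_i(s)$ stays within the bounded region $\bounded{\eta}$ on $[0, t^*]$, which in turn provides the Lipschitz control required for the fixed-point argument. The key quantitative estimate is that $g^\eta$ is Lipschitz on $\bounded{\eta}$ uniformly in $\eta \ge 1$: from the polynomial expansion in \eqref{eq:g^n}, a direct computation gives $|g^\eta(x) - g^\eta(x')| \le C_d |x - x'|$ for $|x|, |x'| \le \eta + 1$, with the improved bound $|g^\eta(x) - g^\eta(x')| = \bo{\eta^{-(1-\alpha)}}|x - x'|$ on the small-$i$ region $|x|, |x'| \le \eta^\alpha$.

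Writing $\Phi$ for the right-hand side operator and working with the version of $\Phi$ truncated outside $\bounded{\eta}$ so as to obtain a globally Lipschitz map, I would estimate
\[
|\Phi(T)_i(s) - \Phi(T')_i(s)| \le C_{d,\lambda} \sum_{j=i-1}^{i+1} \int_0^s |T_j(u) - T'_j(u)|\, du.
\]
Multiplying by $\rho^{-i}$ and summing, the shift to $i+1$ contributes an extra factor of $\rho$, so I obtain
\[
\vn{\Phi(T) - \Phi(T')}_{\rho, s} \le C'_{d, \lambda, \rho} \int_0^s \vn{T - T'}_{\rho, u}\, du.
\]
A standard Picard--Gr\"onwall iteration then yields a unique fixed point $T \in D^{\infty,\rho}_t$ of the truncated problem, and defining $t^*$ by \eqref{eq:stopping-time-full} gives the stopped process $\hat{T}$ from \eqref{eq:integral-stopped} that solves the original system on $[0, t^*]$. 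Finiteness of the $\rho$-norm is clear for finite $\eta$ since $|T_i(s)| \le \eta + 1$, and for $\eta = \infty$ it follows from $(b, y) \in \region{K}$ together with the Gr\"onwall bound.

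For continuity on $\region{K}$, I would take $(b^{(n)}, y^{(n)}, \lambda^{(n)}, \eta^{(n)}) \to (b, y, \lambda, \eta)$ in the product topology and apply the same Gr\"onwall estimate to the difference $\hat{T}^{(n)} - \hat{T}$. The four sources of discrepancy are bounded separately: (i) $\vn{b^{(n)} - b}_\rho$ and $\vn{y^{(n)} - y}_{\rho, t}$ tend to zero by combining coordinate-wise convergence with the uniform $K$-bound via a tail-truncation argument; (ii) $|\lambda^{(n)} - \lambda|$ is scalar and absorbed trivially; and (iii) $\sup_{|x| \le \eta + 1}|g^{\eta^{(n)}}(x) - g^\eta(x)| \to 0$ by the polynomial formula, including at $\eta = \infty$ where $g^\infty \equiv 0$ and $g^{\eta^{(n)}}(x) = O(x^2/\eta^{(n)})$ uniformly on bounded sets. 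Plugging these bounds into the same Gr\"onwall estimate used above yields $\vn{\hat{T}^{(n)} - \hat{T}}_{\rho, t} \to 0$, which in particular implies continuity in the product topology on the codomain.

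The main obstacle is that the stopping time $t^*$ depends on the solution, so a priori $t^{*(n)}$ need not converge to $t^*$ and the freezing rule in \eqref{eq:integral-stopped} is not manifestly continuous. I plan to handle this by running the Gr\"onwall estimate on the common survival interval $[0, t^* \wedge t^{*(n)}]$, where both processes remain inside the relevant bounded regions and the Lipschitz estimates are in force, and then using the absorbing nature of the freezing to control the complementary interval: once one process has been frozen, the closeness already established at the earlier of the two stopping times, combined with a short-time stability bound for the process that is still evolving, governs the remaining difference. A case split on the ordering of $t$, $t^*$, and $t^{*(n)}$ completes the continuity claim.
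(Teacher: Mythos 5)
The existence--uniqueness portion of your plan (Picard iteration / contraction mapping for the stopped operator, with the Lipschitz control from $g^\eta$ restricted to the bounded region) is essentially the paper's argument and is sound. The gap is in the continuity step.

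You assert that ``$\vn{b^{(n)} - b}_\rho$ and $\vn{y^{(n)} - y}_{\rho,t}$ tend to zero by combining coordinate-wise convergence with the uniform $K$-bound via a tail-truncation argument.'' This is false. Coordinate-wise convergence together with a uniform bound on $\vn{\cdot}_\rho$ does \emph{not} force $\rho$-norm convergence, because the bound on total mass does not give equi-smallness of the tails: take $b^{(n)}_i = K\rho^n \1\{i = n\}$ and $b = 0$. Then $b^{(n)}_i \to b_i$ for every $i$, $\vn{b^{(n)}}_\rho = K$ for all $n$, yet $\vn{b^{(n)} - b}_\rho = K$ does not go to zero. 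In the case $\eta = \infty$ (which is the one that matters for Theorem~\ref{thm:const-result}), $\bounded{\eta} = \R^{\infty,\rho}$ and the pointwise constraint $|b_i + y_i(0)| \le \eta+1$ gives you nothing, so nothing rules out such sequences inside $\region{K}$. Consequently your Gr\"onwall estimate on $\vn{\hat{T}^{(n)} - \hat{T}}_{\rho,t}$ starts from a quantity that does not vanish, and the plan breaks down at this step. (This is also why the theorem carefully restricts to the \emph{product} topology on both domain and codomain; $f$ is not claimed to be continuous in $\vn{\cdot}_{\rho,t}$.)

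The paper gets around this by never needing $\rho$-norm convergence of the inputs. Fixing a coordinate cutoff $i_0$ and an iteration depth $r$, it decomposes
\[
\vn{x_i^n - x_i}_t \le \vn{x_i^n - x_i^{n,r}}_t + \vn{x_i^{n,r} - x_i^r}_t + \vn{x_i^r - x_i}_t,
\]
where $x^{n,r}, x^r$ are the $r$th Picard iterates. The first and third terms are bounded by $2^{-r+2}K\rho^{i_0}$ via the contraction rate, so taking $r$ large makes them small uniformly in $n$. The middle term is controlled by unrolling the iteration $r$ times; this propagates influence only through coordinates $j \le i_0 + r + 1$, so after $r$ steps only \emph{finitely many} coordinates of $b^n - b$ and $y^n - y$ (plus $\lambda^n - \lambda$ and the convergence $g^{\eta^n} \to g^\eta$ from Lemma~\ref{thm:g^m-converge}) enter the estimate, and product-topology convergence is exactly what is needed. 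If you want to salvage your proposal, this finite-depth unrolling is the missing ingredient: it replaces the naive Gr\"onwall bound on the full $\rho$-norm difference with a bound that, for any fixed coordinate window and any fixed depth, only sees finitely many coordinates of the data.

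Your discussion of the stopping-time mismatch is also imprecise relative to what the paper does: rather than running Gr\"onwall on $[0, t^* \wedge t^{*(n)}]$ and handling a case split afterwards, the paper builds the stopping directly into the operator $\hat{\Gamma}$, so that the fixed point is already the frozen process and the iterate decomposition above applies uniformly in time without separating cases.
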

\begin{remark}\label{rem:eta=infty}
    Note that if $\eta = \infty$ then $t^* = \infty$ and thus $T = \hat{T}$.
Further note that for $\eta < \infty$, the definition of
$t^*$ implies that in fact either $\hat{T} \in D^{\eta+1,\rho}_t$ or $t^* =
0$. In the latter case $\hat{T} = b + y(0)$ is a constant function.
\end{remark}
We prove Theorem \ref{thm:const-integral} in Section \ref{sec:integral}.
We now turn to our main result.
\begin{theorem}\label{thm:const-result}
Suppose $\lambda_n$ satisfies \eqref{eq:lambda-rate} for a sequence $\eta_n$
satisfying \eqref{eq:eta-rate} for some $Q > 0$. Suppose there exists $\rho > 1$ such that
\begin{equation}\label{eq:const-starting-state}
	T^n(0) \Rightarrow T(0) \quad \text{ in } \R^{\infty,\rho} \text{ as } n
\to \infty,
\end{equation}
for some random variable $T(0) \in \R^{\infty,\rho}$. 
Furthermore, suppose 
\begin{equation}\label{eq:starting-expectation-bounded}
    \limsup_{n \to \infty}\E\left[\vn{T^n(0)}_{\rho}\right] < \infty,
\end{equation}
and there exists $0 < \alpha < 1/2$
such that for all sufficiently large $n$, almost surely
\begin{equation}\label{eq:start-inside}
    T_i^n(0) \le \eta_n^{\alpha}, \quad 0 \le i \le i^*.
\end{equation}
Then for any $t \ge 0$,
\[ T^n \Rightarrow T \quad \text{ in } D^{\infty,\rho}_t \text{ as } n \to \infty,\]
where $T$ is the unique solution of the system
\eqref{eq:result-0}-\eqref{eq:result-i}.
\end{theorem}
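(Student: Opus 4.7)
My plan is to identify $T^n$ as the image of a random input $(T^n(0), M^n, \lambda_n, \eta_n)$ under the stopped integral-equation map $f$ constructed in Theorem~\ref{thm:const-integral}, where $M^n$ is a vector of compensated Poisson martingales, and then to invoke the continuous mapping theorem. The limiting inputs will be $(T(0), 0, 1, \infty)$, and since $\eta = \infty$ forces $t^* = \infty$ by Remark~\ref{rem:eta=infty}, the limit will be $f(T(0),0,1,\infty) = T$, the unique solution to \eqref{eq:result-0}--\eqref{eq:result-i}.

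The first concrete step is to derive the integral-equation representation satisfied by $T^n$. Substituting $S_i^n = 1 - T_i^n/\eta_n$ in \eqref{eq:s-form-i} and using the identity $(1-x/\eta)^d = 1 - dx/\eta + (d/\eta)g^\eta(x)$, which is immediate from \eqref{eq:g^n}, the arrival intensity becomes $(d/\eta_n)\bigl(T_i^n - T_{i-1}^n - g^{\eta_n}(T_i^n) + g^{\eta_n}(T_{i-1}^n)\bigr)$ while the departure intensity becomes $(T_{i+1}^n - T_i^n)/\eta_n$. Rescaling by the factor $\eta_n/n$ and writing each Poisson time-change as its compensator plus a martingale yields, for every $i \ge 1$,
\begin{align*}
T_i^n(t) = T_i^n(0) + M_i^n(t) &- \lambda_n d \int_0^t \Bigl(T_i^n(s) - T_{i-1}^n(s) - g^{\eta_n}(T_i^n(s)) + g^{\eta_n}(T_{i-1}^n(s))\Bigr) ds \\
&+ \int_0^t \bigl(T_{i+1}^n(s) - T_i^n(s)\bigr) ds,
\end{align*}
which is precisely \eqref{eq:integral-i} with $(b,y,\lambda,\eta) = (T^n(0), M^n, \lambda_n, \eta_n)$. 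This derivation, to be developed in Section~\ref{sec:martingales}, identifies $T^n$ with $f(T^n(0), M^n, \lambda_n, \eta_n)$ up to the stopping time.

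It then remains to show $(T^n(0), M^n, \lambda_n, \eta_n) \Rightarrow (T(0), 0, 1, \infty)$ in the product topology on $\R^{\infty,\rho} \times D^{\infty,\rho}_t \times \R \times \Rbar_{\ge 1}$ and to verify that the quadruples lie in some fixed $\region{K}$ with probability tending to one, so that the continuity assertion of Theorem~\ref{thm:const-integral} applies. Convergence of the first coordinate is given by \eqref{eq:const-starting-state}; the third and fourth follow from \eqref{eq:lambda-rate} together with $\eta_n \to \infty$; location inside $\region{K}$ is delivered by \eqref{eq:starting-expectation-bounded} (for the $\rho$-norm) and \eqref{eq:start-inside} (for $T^n(0) + M^n(0) \in \bounded{\eta_n}$). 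The main obstacle, to be dealt with in Section~\ref{sec:martingale-convergence}, is the weak convergence $M^n \Rightarrow 0$ in $D^{\infty,\rho}_t$. Each jump of $M_i^n$ has size $\eta_n/n \le Q/\sqrt{n}$, and the intensity feeding $M_i^n$ on $[0, t \wedge t^*]$ is at most of order $n$, so Doob's $L^2$ inequality gives $\E[\vn{M_i^n}_t^2] = \bo{\eta_n^2/n} = \bo{1}$ with better bounds when the intensity is small; summing against $\rho^{-i}$ and using \eqref{eq:starting-expectation-bounded} to control the tail in $i$ should yield $\E[\vn{M^n}_{\rho,t}] \to 0$. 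This is exactly where $\eta_n = \bo{\sqrt{n}}$ enters decisively. Once the martingale input converges to zero, the continuous mapping theorem applied to $f$ restricted to $\region{K}$ gives $\hat{T}^n \Rightarrow T$ in $D^{\infty,\rho}_t$; and because the limiting $T$ is bounded on $[0,t]$ while $\eta_n \to \infty$, a standard argument (applied to $\hat{T}^n$ to show it stays strictly inside the thresholds defining $t^*_n$ with high probability) gives $\PP(t^*_n \ge t) \to 1$, so $\hat{T}^n$ agrees with $T^n$ on $[0,t]$ with probability tending to one, completing the proof.
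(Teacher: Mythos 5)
Your high-level strategy coincides with the paper's: represent the stopped process $\hat{T}^n$ as $f(T^n(0), y^n, \lambda_n, \eta_n)$ with $y^n$ a martingale term, show $y^n \Rightarrow 0$, use the continuity from Theorem~\ref{thm:const-integral} to get $\hat{T}^n \Rightarrow T$, and finally argue $\PP(t^*_n \le t) \to 0$ to remove the stopping. (The paper separates the arrival and departure martingales into $-M^n$ and $N^n$ rather than a single $M^n$, but that is cosmetic.) Two steps in your sketch, however, are not actually executable as written and are precisely where the paper does extra work.

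First, the plan to ``invoke the continuous mapping theorem applied to $f$ restricted to $\region{K}$'' fails, and the paper says so explicitly: neither the inputs $Y^n = (T^n(0), -M^n+N^n, \lambda_n, \eta_n)$ nor the limit $Y = (T(0),0,1,\infty)$ lie in any fixed $\region{K}$ almost surely, because $\vn{T^n(0)}_\rho$ and $\vn{y^n}_{\rho,t}$ are random and only bounded in expectation. The CMT requires $f$ to be a.s. continuous at the limit as a map on the full space, while Theorem~\ref{thm:const-integral} only gives continuity of the restriction $f|_{\region{K}}$. The paper instead argues directly via the portmanteau lemma: fix a closed $F$, split on the event $A_{K,n} = \{\max(\vn{T^n(0)}_\rho, \vn{y^n}_{\rho,t}) > K\}$, use that on $A^c_{K,n}$ (together with \eqref{eq:start-inside} and $y^n(0)=0$) one has $Y^n \in \region{K}$ and $f^{-1}(F) \cap \region{K}$ is closed, and finally send $K \to \infty$ using $\limsup_n \PP(A_{K,n}) \to 0$. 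Your proposal acknowledges the need for $\region{K}$ membership with high probability but does not supply this workaround, and without it the convergence $f(Y^n) \Rightarrow f(Y)$ does not follow.

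Second, the Doob-$L^2$ estimate you quote, $\E\vn{M_i^n}_t^2 = \bo{\eta_n^2 / n} = \bo{1}$, is not strong enough: summing a uniform $\bo{1}$ bound against $\rho^{-i}$ gives $\bo{1}$, not $o(1)$. You need the per-coordinate expectation to vanish for $i \le i^*_n$, and the geometric decay to control $i > i^*_n$; moreover, because the martingales entering the representation of $\hat{T}^n$ are the \emph{unstopped} compensated Poisson processes, bounding the intensity on $[0, t\wedge t^*_n]$ does not directly control the sup on $[0,t]$. The paper's Proposition~\ref{thm:martingale-limit} therefore does not use the stopping time at all: it introduces the auxiliary event $A = \{\exists\, i \le i^*_n+1 : \vn{T_i^n}_t \ge \delta_n^{-1}C\rho^{i^*_n+1}\}$ with $\PP(A)$ made small via Lemma~\ref{thm:const-expectation-finite} and Markov's inequality, bounds $\E[\vn{N_i^n}_t \1\{A^c\}]$ using the FCLT after carefully tracking the exponent (this is where $\eta_n = \bo{\sqrt{n}}$ and $\alpha < 1/2$ enter), and controls $\E[\vn{N_i^n}_t \1\{A\}]$ by Cauchy--Schwarz. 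Your ``better bounds when the intensity is small'' gesture toward this, but the actual cancellation is delicate and your sketch as stated would not close.
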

The motivation for the initial condition assumptions
\eqref{eq:starting-expectation-bounded} and \eqref{eq:start-inside} is as
follows:
as we will see below, we expect that in steady state the limiting system $T$
grows like $T_i = d^i$, so condition
\eqref{eq:starting-expectation-bounded} can be considered as requiring $T^n(0)$
to be consistent with this behavior, with $\rho > d$. Condition
\eqref{eq:start-inside} is similar, as $d^{i^*} \approx
\eta_n^{\alpha / 2}$.
We prove Theorem \ref{thm:const-result} in Section
\ref{sec:martingale-convergence}.

Now we turn to results about the solution $T$ which appears in
Theorem~\ref{thm:const-result}.
\begin{theorem}\label{thm:fixed-point}
    Consider the system of integral equations given by
\eqref{eq:result-0}-\eqref{eq:result-i}.
This system has a fixed point $\pi = (\pi_i, i \ge 0)$ given by
\begin{equation}\label{eq:fixed-point}
    \pi_i = \pi_1{d^i - 1 \over d -1}.
\end{equation}
This fixed point is unique up to the constant $\pi_1$.
\end{theorem}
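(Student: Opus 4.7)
The plan is direct: interpret ``fixed point'' to mean a time-constant solution $T(t) \equiv \pi$ of the integral system \eqref{eq:result-0}--\eqref{eq:result-i}, and then solve the resulting linear recurrence on $\pi$ explicitly.

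First I would substitute $T_i(t) = \pi_i$ into \eqref{eq:result-0}--\eqref{eq:result-i}. The boundary equation \eqref{eq:result-0} forces $\pi_0 = 0$. For $i \ge 1$, the integrands become constants, so \eqref{eq:result-i} reduces to
\begin{equation*}
0 = -d(\pi_i - \pi_{i-1})\, t + (\pi_{i+1} - \pi_i)\, t \quad \text{for all } t \ge 0,
\end{equation*}
which is equivalent to the linear recurrence $\pi_{i+1} - \pi_i = d(\pi_i - \pi_{i-1})$ for $i \ge 1$.

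Next I would solve this recurrence. Setting $\Delta_i \defeq \pi_i - \pi_{i-1}$, the recurrence reads $\Delta_{i+1} = d \Delta_i$, so $\Delta_i = d^{i-1} \Delta_1 = d^{i-1} \pi_1$ (using $\pi_0 = 0$). Summing a geometric series yields
\begin{equation*}
\pi_i = \sum_{j=1}^{i} \Delta_j = \pi_1 \sum_{j=1}^{i} d^{j-1} = \pi_1 \frac{d^i - 1}{d-1},
\end{equation*}
which matches \eqref{eq:fixed-point}. Conversely, substituting this formula back into \eqref{eq:result-i} verifies that $\pi$ is indeed a solution, so existence is established. I would also note that since $\pi_i$ grows like $d^i$, we have $\pi \in \R^{\infty,\rho}$ for every $\rho > d$, so $\pi$ lives in the relevant state space.

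For uniqueness up to $\pi_1$, the argument above is already deterministic: the constraint $\pi_0 = 0$ together with any choice of $\pi_1$ uniquely determines every subsequent $\pi_i$ through the recurrence. Thus the one-parameter family $\{\pi_1 (d^i-1)/(d-1)\}_{i \ge 0}$ exhausts all fixed points. There is essentially no hard step here; the only mild care needed is to justify that a fixed point of the integral system is the same as a solution of the corresponding stationary recurrence, which is immediate since the integrands are constant in $t$ and the equation must hold for every $t \ge 0$.
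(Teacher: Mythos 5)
Your proposal is correct and takes essentially the same approach as the paper: set the (constant) integrand to zero to obtain the linear recurrence $\pi_{i+1} = (d+1)\pi_i - d\pi_{i-1}$ with $\pi_0 = 0$, then solve it to get $\pi_i = \pi_1(d^i-1)/(d-1)$. The only cosmetic difference is that you solve the recurrence by telescoping first differences, while the paper writes down the general solution of the second-order linear recurrence directly.
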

We also show that this fixed point is attractive:
\begin{theorem}\label{thm:Lyap}
Let $\Phi(t) = \sum_{i \ge 1}d^{-i/2}|T_i(t) - \pi_i|$. If $\Phi(0) < \infty$ 
then $\Phi$ converges exponentially fast to zero. 
Specifically, $\Phi(t) \le \Phi(0) e^{-\left(\sqrt{d}-1\right)^2 t}$ for all $t
\ge 0$.
\end{theorem}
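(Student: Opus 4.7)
The plan is to introduce the deviation variables $U_i(t) \defeq T_i(t) - \pi_i$ and show that the weighted norm $\Phi(t) = \sum_{i \ge 1} d^{-i/2}|U_i(t)|$ satisfies a differential inequality yielding the stated exponential decay via Gronwall's lemma. First, observe that $\pi_0 = 0$ by \eqref{eq:fixed-point} and $T_0 \equiv 0$, so $U_0 \equiv 0$. Differentiating \eqref{eq:result-i} and subtracting the fixed-point relation $d(\pi_i - \pi_{i-1}) = \pi_{i+1} - \pi_i$, each $U_i$ solves the linear ODE
\[\dot{U}_i(t) = -(d+1)U_i(t) + d\, U_{i-1}(t) + U_{i+1}(t), \qquad i \ge 1.\]
At times where $U_i(t) \neq 0$, the upper Dini derivative satisfies $D^+|U_i(t)| = \operatorname{sgn}(U_i(t))\dot{U}_i(t)$, while at zeros one has $D^+|U_i(t)| \le |\dot{U}_i(t)|$; in either case,
\[D^+|U_i(t)| \le -(d+1)|U_i(t)| + d|U_{i-1}(t)| + |U_{i+1}(t)|.\]

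Next I would multiply by $d^{-i/2}$, sum over $i \ge 1$, and reindex the shift terms. Setting $j = i-1$ (and using $U_0 = 0$) gives $\sum_{i \ge 1} d^{-i/2}\cdot d\,|U_{i-1}(t)| = \sqrt{d}\,\Phi(t)$, while $j = i+1$ gives $\sum_{i \ge 1} d^{-i/2}|U_{i+1}(t)| \le \sqrt{d}\,\Phi(t)$. Combining,
\[D^+\Phi(t) \le \bigl(-(d+1) + 2\sqrt{d}\bigr)\Phi(t) = -(\sqrt{d}-1)^2\,\Phi(t),\]
and Gronwall's lemma delivers $\Phi(t) \le \Phi(0)\, e^{-(\sqrt{d}-1)^2 t}$.

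The main obstacle is justifying the termwise differentiation of the infinite sum defining $\Phi$. I would first show that $\Phi(0) < \infty$ propagates: applying the triangle inequality directly to \eqref{eq:result-i}, weighting by $d^{-i/2}$, and summing yields a crude integral bound $\Phi(t) \le \Phi(0) + C\int_0^t \Phi(s)\,ds$, so $\Phi$ remains finite for all $t \ge 0$ by Gronwall. Once finiteness is in hand, I would legitimize the computation by working with partial sums up to level $N$, where each $U_i$ is smooth and the Dini-derivative manipulation is rigorous, then pass to the limit $N \to \infty$ using the tail bound provided by the weighted norm to control the boundary contribution $d^{-N/2}|U_{N+1}(t)|$ that arises from truncation. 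An alternative that avoids Dini derivatives entirely is to write the bound in integrated form, obtain $\Phi(t) \le \Phi(0) - (\sqrt{d}-1)^2\int_0^t \Phi(s)\,ds$ for the partial sums, and then invoke the integral form of Gronwall.
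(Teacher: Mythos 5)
Your proof is correct and follows essentially the same approach as the paper: linearize around the fixed point to obtain $\dot{U}_i = dU_{i-1}-(d+1)U_i+U_{i+1}$, weight by $d^{-i/2}$ so that both shift terms contribute $\sqrt{d}\,\Phi$, and conclude $D^+\Phi \le -(\sqrt{d}-1)^2\Phi$ via Gronwall. The only differences are cosmetic improvements on your part: you replace the paper's explicit case analysis on the signs of $\epsilon_{i-1},\epsilon_i,\epsilon_{i+1}$ with a single termwise triangle inequality on $D^+|U_i|$, and you explicitly flag (and sketch a fix for) the termwise-differentiation of the infinite sum, a point the paper passes over silently.
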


Finally we consider the system in steady state. Via elementary arguments we  
prove a bound on the expectation of the fraction of short queues in steady
state. For $i \ge 0$, let $S_i^n(\infty)$ be the fraction of queues with length
at least $i$ in steady state. For the statement below, we assume $\lambda_n$ is
given by \eqref{eq:lambda-rate} and $\eta_n \to \infty$ is arbitrary. In
particular, the assumption \eqref{eq:eta-rate} is no longer needed.
\begin{theorem}\label{thm:steady-state-expectation}
For $i \ge 0$ we have
\begin{equation}\label{eq:steady-state-expectation}
\E S_i^n(\infty) \ge 1 - \left(1 - \lambda_n\right){d^i -1\over d-1}.
\end{equation}
As a result, for any sequence $\omega_n$ which diverges to infinity as $n
\to \infty$, the fraction of queues with length at least $\log_d \eta_n -
\omega_n$ approaches unity with probability approaching one as $n \to
\infty$. This further implies that a customer arriving in steady state
experiences a delay of at least $\log_d \eta_n - \omega_n$ with
probability approaching one as $n \to \infty$.
\end{theorem}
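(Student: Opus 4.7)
The plan is to derive and then solve an explicit recursion for $\E S_i^n(\infty)$ by a steady-state flow-balance (level-crossing) argument. Specifically, in steady state I will equate the mean rate at which the count of servers holding at least $i$ customers goes up and the mean rate at which it goes down. Upward crossings at level $i$ occur when an arrival joins a server of length exactly $i-1$, which happens (conditional on the arrival) with probability $(S_{i-1}^n)^d - (S_i^n)^d$, so the expected upward rate is $\lambda_n n\,\E[(S_{i-1}^n(\infty))^d - (S_i^n(\infty))^d]$. Downward crossings at the same level occur through service completions at the $n(S_i^n - S_{i+1}^n)$ servers holding exactly $i$ customers, so the expected downward rate is $n\,\E[S_i^n(\infty) - S_{i+1}^n(\infty)]$. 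Equating these and then telescoping the resulting identity from $j=1$ to $j=i$, using $S_0^n \equiv 1$ and the identity $\E S_1^n(\infty) = \lambda_n$ (obtained by equating arrival and departure rates per server), will produce the clean recursion $\E S_{i+1}^n(\infty) = \lambda_n\,\E[(S_i^n(\infty))^d]$.

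Next I would set $u_i = 1 - \E S_i^n(\infty)$ and apply Bernoulli's inequality $(1-x)^d \ge 1 - dx$ with $x = 1 - S_i^n$ to obtain $\E[(S_i^n(\infty))^d] \ge 1 - d u_i$, which converts the recursion into the linear upper bound
\[u_{i+1} \le (1 - \lambda_n) + \lambda_n d\, u_i \le (1 - \lambda_n) + d\, u_i.\]
Iterating from $u_0 = 0$ then yields $u_i \le (1 - \lambda_n)\sum_{j=0}^{i-1} d^j = (1 - \lambda_n)(d^i - 1)/(d-1)$, which is precisely \eqref{eq:steady-state-expectation}.

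To finish the corollaries, I would substitute $i = \log_d \eta_n - \omega_n$; since $1 - \lambda_n = (\beta + o(1))/\eta_n$ and $d^i = \eta_n d^{-\omega_n}$, the right-hand side of the bound becomes $\bo{d^{-\omega_n}} \to 0$, so $\E S_i^n(\infty) \to 1$. Because $S_i^n(\infty) \in [0,1]$, convergence of the mean to $1$ forces convergence in probability. For the delay assertion, PASTA says an arriving customer sees the stationary distribution, and the length of the customer's chosen (shortest among $d$ sampled) queue is at least $i$ with probability $\E[(S_i^n(\infty))^d] \ge (\E S_i^n(\infty))^d \to 1$ by Jensen; any customer joining a queue of length at least $i$ must then wait for at least $i$ service completions. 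The only real hurdle I foresee is carefully justifying the steady-state balance equation and the identity $\E S_1^n(\infty) = \lambda_n$, both of which follow from standard arguments for the positive recurrent (since $\lambda_n < 1$) Markov chain underlying the finite-$n$ system.
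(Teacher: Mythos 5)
Your proposal is correct and lands on exactly the same bound, via a lightly reorganized version of the paper's argument. Both start from the same steady-state balance relation derived from \eqref{eq:s-form-i} and both use Little's law to pin down $\E S_1^n(\infty)=\lambda_n$. The paper linearizes immediately, applying $x^d - y^d \le d(x-y)$ for $0\le y\le x\le 1$ to the balance equation to obtain the incremental recursion $\sigma_i \le \lambda_n d\,\sigma_{i-1}$ for $\sigma_i=\E S_i^n(\infty)-\E S_{i+1}^n(\infty)$, then sums; you instead telescope the balance identity \emph{first} to get the clean exact relation $\E S_{i+1}^n(\infty)=\lambda_n\,\E[(S_i^n(\infty))^d]$, and only then linearize via Bernoulli's inequality to get $u_{i+1}\le (1-\lambda_n)+d\,u_i$. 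These two linearizations ($x^d - y^d \le d(x-y)$ and $(1-x)^d\ge 1-dx$) are the same inequality in different clothing, and both yield $u_i\le(1-\lambda_n)(d^i-1)/(d-1)$. For the two corollaries the arguments again coincide, modulo that you invoke Jensen's inequality $\E[(S_i^n)^d]\ge(\E S_i^n)^d$ to handle the delay claim directly from the expectation bound, whereas the paper passes through convergence in probability of $S_i^n(\infty)$ and then of $(S_i^n(\infty))^d$; either is valid since $S_i^n(\infty)\in[0,1]$. One small thing worth checking in your writeup: you conjecture the telescoped identity before verifying it; it does indeed follow cleanly since $\sum_{j=1}^i \lambda_n\E[(S_{j-1}^n)^d-(S_j^n)^d]=\lambda_n-\lambda_n\E[(S_i^n)^d]$ and $\sum_{j=1}^i(\E S_j^n-\E S_{j+1}^n)=\lambda_n-\E S_{i+1}^n$, so your plan is sound.
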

We prove Theorems \ref{thm:fixed-point}-\ref{thm:steady-state-expectation} in
Section \ref{sec:steady-state}.

Note that for any sequence $\omega_n$ which diverges to infinity as $n
\to \infty$, the right hand side of \eqref{eq:steady-state-expectation}
diverges to negative infinity for any $i \ge \log_d \eta_n + \omega_n$. Because
\eqref{eq:steady-state-expectation} is only a lower bound this is not useful,
but it does suggest that the behavior of $S_i^n$ is best examined
for values of $i$ near $\log_d\eta_n$.

\section{The model in steady state.}\label{sec:steady-state}

\begin{proof}[Proof of Theorem \ref{thm:fixed-point}]
We set the derivative of $T_i$ to zero and
and introduce the notation $\pi = (\pi_0,\pi_1,\pi_2,\ldots)$ for the
desired fixed point. This leads to the recurrence
\[\pi_{i+1} = (d+1)\pi_i - d\pi_{i-1}, \quad\quad i \ge 1,\]
which is solved by
\[\pi_i = {1 \over d-1}\left( d \pi_0 - \pi_1 + d^i(\pi_1 -
\pi_0)\right), \quad\quad i \ge 0.\]
By \eqref{eq:result-0} we have $T_0(t) = 0$ for all $t \ge 0$ and thus $\pi_0
= 0$, so this fixed point reduces to
\[\pi_i = \pi_1{d^i - 1 \over d - 1}, \quad\quad i \ge 0.\]
\end{proof}
Next we prove Theorem \ref{thm:Lyap}:
\begin{proof}[Proof of Theorem \ref{thm:Lyap}]
   Define $\epsilon_i(t) = T_i(t) - \pi_i$. We have
\begin{align*}
{d\epsilon_i \over dt} &= d(\epsilon_{i-1} +\pi_{i-1}) -
(d+1)(\epsilon_i+\pi_i) + (\epsilon_{i+1}+\pi_{i+1}) \\
&= d \epsilon_{i-1} - (d+1)\epsilon_i + \epsilon_{i+1}.
\end{align*}

Temporarily assume $\epsilon_i \ne 0$ for all $i$ so the derivative $d\Phi
\over dt$ is well defined. After providing a basic argument under this
assumption we will explain how to remove it.
Now we have
\begin{align*}
{d\Phi \over dt} &= \sum_{i : \epsilon_i > 0}d^{-i/2}\left(d 
\epsilon_{i-1} - (d+1)\epsilon_i + \epsilon_{i+1}\right) \\
&\quad\quad - \sum_{i : \epsilon_i < 0}d^{-i/2}\left(d \epsilon_{i-1} -
(d+1)\epsilon_i + \epsilon_{i+1}\right).
\end{align*}
Let us now consider the terms involving $\epsilon_i$. We will first consider $i
\ge 2$. 
There are several cases,
depending on the signs of $\epsilon_{i-1},\epsilon_i$, and $\epsilon_{i+1}$.
First suppose they are all negative, so the term involving $\epsilon_i$, which
we denote $A_i$, is
\begin{align*}
A_i &= -d^{-(i-1)/2}\epsilon_i + d^{-i/2}(d+1)\epsilon_i - d^{-(i
+1)/2}d\epsilon_i \\
&= d^{-i/2}\epsilon_i\left(-\sqrt{d} + d + 1 - {d \over \sqrt{d}}\right) \\
&= \left(1 - 2\sqrt{d} + d\right)d^{-i/2}\epsilon_i.
\end{align*}
We define
\[\delta = 1 - 2\sqrt{d} + d = \left(\sqrt{d} - 1\right)^2\]
and note that the assumption $d \ge 2$ implies $\delta > 0$. Now note that if
the sign of $\epsilon_{i-1}$ or $\epsilon_{i+1}$ or both
is positive and $\epsilon_i$ remains negative this will simply
change the sign of the appropriate coefficient of $\epsilon_i$ from negative to
positive, decreasing $A_i$. 
Thus for all cases with $\epsilon_i$ negative we have
\[A_i \le \delta d^{-i/2}\epsilon_i.\]
If all three signs are positive, we have
\begin{align*}
A_i &= d^{-(i-1)/2}\epsilon_i - d^{-i/2}(d+1)\epsilon_i + d^{-(i
+1)/2}d\epsilon_i \\
&= d^{-i/2}\epsilon_i\left(\sqrt{d} - d - 1 + {d \over \sqrt{d}}\right) \\
&= -\delta d^{-i/2}\epsilon_i,
\end{align*}
and if $\epsilon_{i-1}$ or $\epsilon_{i+1}$ is negative we still have
\[A_i \le -\delta d^{-i/2}\epsilon_i.\]
Thus for all $i \ge 2$ we have
\[A_i \le -\delta d^{-i/2}|\epsilon_i|.\]
To see that this inequality also holds for $i = 1$ note that for that case
we simply omit the first term of $A_i$.
Using this result for all $i \ge 1$ we conclude
\begin{align*}
{d\Phi \over dt} &= \sum_{i \ge 1} A_i \\
&\le \sum_{i \ge 1}-\delta d^{-i/2}|\epsilon_i| \\
&= -\delta \Phi.
\end{align*}
Thus we have
\[\Phi(t) \le \Phi(0)e^{-\delta t},\]
and conclude that $\Phi(t)$ converges exponentially.

As in Mitzenmacher \cite{mitzenmacher}, because we are interested in the
evolution of the system as time increases,
we can account for the $\epsilon_i = 0$
case by considering upper right-hand derivatives of $\epsilon_i$, defining
\[\left.{d|\epsilon_i| \over dt}\right\vert_{t = t_0} \defeq \lim_{t \to
t_0^+}{|\epsilon_i(t)| \over t-t_0},\]
and similarly for ${d\Phi \over dt}$. Now if $\epsilon_i(t_0) = 0$ we have
$\left.{d|\epsilon_i| \over dt}\right\vert_{t = t_0} \ge 0$, so we can include
the $\epsilon_i = 0$ cases in the above proof with the $\epsilon_i > 0$ case
now also including the case with $\epsilon_i = 0$ and ${d\epsilon_i \over dt}
\ge 0$ and similarly for $\epsilon_i < 0$. 
\end{proof}

\begin{proof}[Proof of Theorem \ref{thm:steady-state-expectation}]
From \eqref{eq:s-form-i} we obtain for $i \ge 1$
\[\E S_i^n(t) = \E S_i^n(0) +
\lambda_n\int_0^t\E\left[\left(S_{i-1}^n(s)\right)^d
- \left(S_i^n(s)\right)^d\right]ds 
- \int_0^t\left(\E S_{i}^n(s) - \E S_{i+1}^n(s)\right)ds.\]
Assuming $(S_i^n(0), i \ge 0)$ has a steady state distribution, the same
applies to $(S_i^n(t), i \ge 0)$, implying $\E S_i^n(0) = \E S_i^n(t)$. Thus
switching to $S_i^n(\infty)$ for steady state version of
$S_i^n(t)$, we obtain
\[0 = \lambda_n\E\left[\left(S_{i-1}^n(\infty)\right)^d
- \left(S_i^n(\infty)\right)^d\right] 
- \left(\E S_{i}^n(\infty) - \E S_{i+1}^n(\infty)\right).\]
Because $0 \le S_i^n(t) \le 1$ for all $i \ge 0$ and $t \ge 0$ and
$S_{i-1}^n(t) \ge S_i^n$ for all $i \ge 1$ and $t \ge 0$, we have the bound
\[\left(S_{i-1}^n(\infty)\right)^d - \left(S_i^n(\infty)\right)^d \le 
d\left(S_{i-1}^n(\infty)-S_i^n(\infty)\right),\]
and thus have
\[0 \le \lambda_n d\left(\E S_{i-1}^n(\infty) - \E S_i^n(\infty)\right) - 
\left(\E S_{i}^n(\infty) - \E S_{i+1}^n(\infty)\right).\]
For $i \ge 0$ define
\[\sigma_i \defeq \E S_i^n(\infty) - \E S_{i+1}^n(\infty),\]
and observe that $\sigma_i$ is the expected number of queues of length exactly
$i$ in steady state. We now obtain the bound
\[\sigma_i \le \lambda_n d \sigma_{i-1}, \quad\quad i \ge 1,\]
which implies
\[\sigma_i \le \sigma_0d^i\left(\lambda_n\right)^i.\]
We have $S_0^n(t) = 1$ and use Little's law to observe $S_1^n(\infty) =
\lambda_n$ resulting in
\[\sigma_i \le (1 - \lambda_n)d^i\lambda_n^i \le (1 - \lambda_n)d^i, \quad i
\ge 1.\]
Now observe
\begin{align*}
    \E S_i^n(\infty) &= \E S_0^n(\infty) - \sum_{j = 0}^{i-1}\left(\E
S_j^n(\infty) - \E S_{j+1}^n(\infty)\right) \\
&= 1 - \sum_{j = 0}^{i-1}\sigma_j \\
&\ge 1 - \left(1 -\lambda_n\right)\sum_{j = 0}^{i-1}  d^j 
= 1 - \left(1-\lambda_n\right) {d^i - 1 \over d - 1}.
\end{align*}
This establishes \eqref{eq:steady-state-expectation}.

Recalling \eqref{eq:lambda-rate},
for $i \le \log_d\eta_n - \omega_n$, we have
\[\E S_i^n(\infty) \ge 1 - \left(1-\lambda_n\right)  {\eta_n d^{-\omega_n} - 1
\over d - 1} \to 1,\]
as $n \to \infty$. Since $S_i^n(\infty) \le 1$, this implies that as $n \to
\infty$, $S_i^n(\infty) \to 1$ in probability.
Namely, the fraction of queues with length at least $\log_d\eta_n - \omega_n$
approaches one in probability.

Finally observe that the probability of an arriving customer in steady state
joining a queue of length at least $i$ is $S_i^n(\infty)^d$. For $i \le
\log_d\eta_n - \omega_n$ because we have $S_i^n(\infty)^d \to 1$ in
probability, a customer
arriving in steady state experiences a delay of at least $\log_d \eta_n -
\omega_n$ with probability approaching one as $n \to \infty$.
\end{proof}

Beyond this elementary bound on the expectation in steady state,
Theorem \ref{thm:const-result} suggests that $T^n(\infty)$ converges to
$\pi$, though formally this is a conjecture because we do not show that the
sequence $T^n(\infty)$ is tight. Establishing this interchange of limits would
be a potential direction for future work on this system. For the remainder of
this section we will suppose the conjecture is true and consider the
implications.

First, treating the fixed point \eqref{eq:fixed-point} as the limit of
$T^n(\infty)$, we have
\[\pi_1 = \lim_{n \to \infty} \eta_n(1-S_1^n(\infty)).\]
We use Little's law to replace $S_1^n(\infty)$ by $\lambda_n$ so we have
\[\pi_1 = \lim_{n \to \infty} \eta_n(1-\lambda_n) = \beta,\]
and therefore the fixed point becomes
\[\pi_i = \beta{d^i - 1 \over d - 1}, \quad\quad i \ge 0.\]

Recall that $T_i(t) = \eta_n\left(1 - S_i(t)\right)$, so 
this fixed point suggests that in steady state the fraction of servers with at
least $i$ jobs can be approximated by
\[S_i^n(\infty) \approx 1 - {\beta \over \eta_n}{d^i-1 \over d-1}\]
when $i = \log_d\eta_n - \omega_n$.

\begin{figure}
    \includegraphics[scale=0.6]{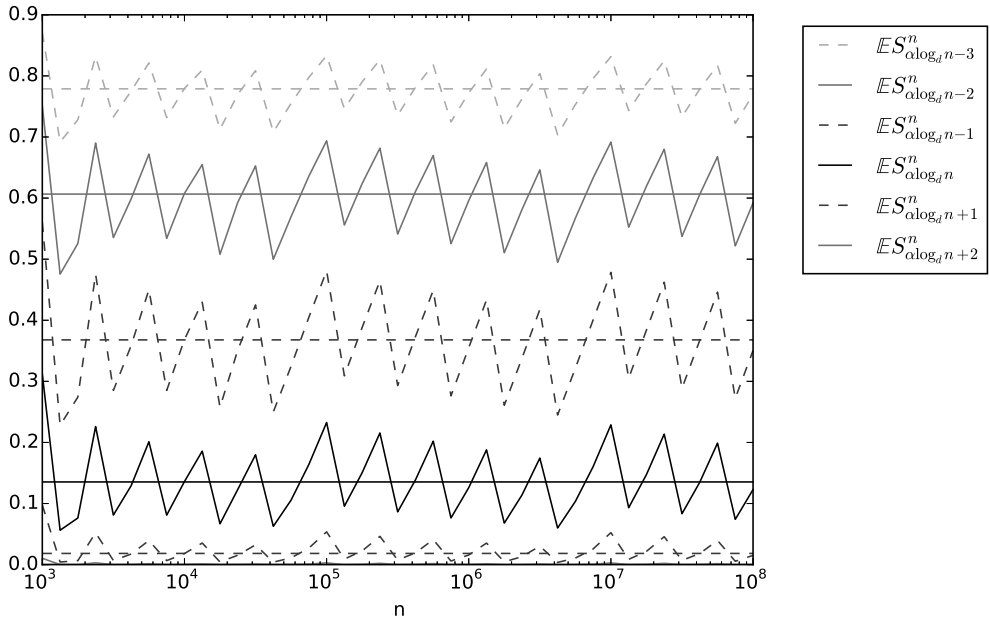}
\caption{Simulated steady state expectation for \Sup\text{ } with $d = 2$ and
$\lambda_n = 1 - \beta n^{-\alpha}$ with $\beta = 2$ and $\alpha = 3/4$. For
each
line the quantity $\alpha \log_d n$ is rounded to the nearest integer. Each
horizontal line indicates $\exp(-\beta d^k / (d-1))$, which is the conjectured
limit of $\E S^n_{\alpha \log_d n + k}$ as $n$ diverges to infinity.}
\label{fig:steady-state}
\end{figure}

We further conjecture that delay times longer than $\log_d \eta_n + \omega_n$
are unlikely.
This is informed by a heuristic analysis of the heavy traffic supermarket model
using the fixed $\lambda < 1$ results proved by Mitzenmacher
\cite{mitzenmacher} and Vvedenskaya, et.~al.~\cite{vvedenskaya}. For fixed
$\lambda < 1$, the system converges to a limiting system which has a unique 
fixed point at
\[\pi_i = \lambda^{d^i-1 \over d-1}.\]
If we simply replace $\lambda$ with $\lambda_n = 1 - {\beta \over \eta_n}$,
then we have
\[\pi_i = \left(1 -{\beta \over \eta_n}\right)^{d^i-1 \over d-1} \to 
\begin{cases} 1 & i \le  \log_d\eta_n - \omega_n \\ 
e^{-{\beta d^k \over d-1}} & i = \log_d \eta_n + k \\
0 & i \ge \log_d \eta_n + \omega_n,   
\end{cases}\]
where $\omega_n$ is any sequence diverging to infinity and $k$ is any constant.
In particular, this heuristic suggests that in steady state all queues will
have length $\log_d \eta_n + \bo{1}$. 

As further evidence for this conjectured behavior, we simulated the system for
a variety of values of $n$ to estimate the expectation in steady state.
Figure~\ref{fig:steady-state} 
shows that the fraction of queues of length at
least $i = \log_d \eta_n + k$ remains approximately constant as $n$ increases.
Furthermore, these simulated steady state expectations appear to vary around
the conjectured limits of $\E S_i^n$ for such $i$, with the variation primarily
introduced by the necessary rounding of $i$ to an integer value.

\section{Integral representation.}\label{sec:integral}
We prove Theorem \ref{thm:const-integral} in this section. 
We will make use of a version of
Gronwall's inequality, which we state now as a lemma (see, e.g., pg. 498 of
\cite{ethier-kurtz}).

\begin{lemma}\label{thm:gronwall}
Suppose that $g : [0,\infty) \to [0,\infty)$ is a function such that
\[0 \le g(t) \le \epsilon + M\int_0^tg(s)ds, \quad 0 \le t \le T,\]
for some positive finite $\epsilon$ and $M$. Then
\[g(t) \le \epsilon e^{Mt}, \quad 0 \le t \le T.\]
\end{lemma}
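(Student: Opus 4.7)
The plan is to convert the integral inequality into a linear differential inequality for the antiderivative, and then integrate it explicitly. Define $h(t) \defeq \int_0^t g(s)\,ds$ on $[0,T]$. Since $g$ is nonnegative and bounded by the integrable expression $\epsilon + M h(t)$, the function $h$ is absolutely continuous, with $h(0) = 0$ and $h'(t) = g(t)$ almost everywhere. The hypothesis then reads
\[h'(t) \le \epsilon + M\, h(t), \quad 0 \le t \le T.\]

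Next I would apply the integrating factor $e^{-Mt}$. Multiplying through, the inequality becomes
\[\frac{d}{dt}\!\left(e^{-Mt} h(t)\right) = e^{-Mt}\bigl(h'(t) - M h(t)\bigr) \le \epsilon e^{-Mt}.\]
Integrating from $0$ to $t$ and using $h(0) = 0$, I get
\[e^{-Mt} h(t) \le \int_0^t \epsilon e^{-Ms}\,ds = \frac{\epsilon}{M}\bigl(1 - e^{-Mt}\bigr),\]
so that $M h(t) \le \epsilon\bigl(e^{Mt} - 1\bigr)$. Substituting back into the hypothesis $g(t) \le \epsilon + M h(t)$ yields
\[g(t) \le \epsilon + \epsilon\bigl(e^{Mt} - 1\bigr) = \epsilon e^{Mt},\]
which is the desired bound.

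There is no real obstacle here; the only mild subtlety is the regularity of $h$. Because $g$ is only assumed to satisfy an inequality rather than to be continuous, one should note that the right-hand side $\epsilon + M\int_0^t g(s)\,ds$ is a nondecreasing absolutely continuous majorant of $g$, so it is legitimate to apply the integrating-factor manipulation to it (or, equivalently, to iterate the inequality $g(t) \le \epsilon + M\int_0^t g(s)\,ds$ and recognize the resulting series as $\epsilon e^{Mt}$). If one prefers to avoid any measurability concern entirely, an alternative route is Picard-style iteration: plugging the inequality into itself $k$ times produces $g(t) \le \epsilon\sum_{j=0}^{k} (Mt)^j/j! + M^{k+1}\int_0^t (t-s)^k g(s)\,ds/k!$, and letting $k \to \infty$ (using that $g$ is bounded on $[0,T]$ by its majorant) gives the same conclusion.
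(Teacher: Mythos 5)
The paper does not actually prove this lemma; it states it as a known fact and refers the reader to Ethier and Kurtz (p.\ 498), so there is no in-paper argument to compare yours against. Your integrating-factor proof is the standard derivation of the integral form of Gronwall's inequality and is correct: setting $h(t)=\int_0^t g(s)\,ds$ makes $h$ absolutely continuous with $h(0)=0$ (once one reads the hypothesis as implicitly assuming $g$ is locally integrable, which is necessary for the statement to be meaningful at all), the a.e.\ inequality $h'\le \epsilon + Mh$ followed by multiplication by $e^{-Mt}$ and integration gives $Mh(t)\le \epsilon(e^{Mt}-1)$, and substituting back into the hypothesis yields $g(t)\le \epsilon e^{Mt}$. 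You also correctly flag the only nontrivial regularity point and give a second, iteration-based route that sidesteps it, so there is no gap.
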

We begin by establishing two lemmas related to the function $g^\eta$.

\begin{lemma}\label{thm:g^m-converge}
Let $0 < \alpha < 1/2$, and $\rho > 1$, and let
$\eta^n,\eta \in \Rbar_{\ge 1}$ be such that $\eta^n \to \eta$,
$i^*_n = {\alpha \over 2} \log_\rho \eta^n$,
$x_i^n \in D([0,t],[-(\eta^n)^{\alpha}, (\eta^{n})^{\alpha}])$ for $0 \le i \le
i^*_n$ and
$x_i^n \in D([0,t],[-\eta^n-1,\eta^{n}+1])$ for $i > i^*_n$. Then for any 
$i_0 \in \N$, $t \ge 0$,
\[\vn{g^{\eta^n}(x_i^n) - g^{\eta}(x_i^n)}_{t} \to 0, \quad i \le i_0.\]
\end{lemma}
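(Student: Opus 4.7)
The plan is to analyze the difference $g^{\eta^n}(x_i^n) - g^{\eta}(x_i^n)$ via the polynomial representation of $g^\eta$ given in \eqref{eq:g^n}, namely
\[g^{\eta^n}(x) - g^{\eta}(x) = \frac{1}{d}\sum_{l=2}^{d}\binom{d}{l}(-1)^{l}x^{l}\left(\frac{1}{(\eta^n)^{l-1}} - \frac{1}{\eta^{l-1}}\right),\]
with the convention that $1/\eta^{l-1} = 0$ when $\eta = \infty$ (so that $g^\infty \equiv 0$). I would then split into the two cases $\eta < \infty$ and $\eta = \infty$.

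In the case $\eta \in \R_{\ge 1}$ finite, the convergence $\eta^n \to \eta$ implies that both $(\eta^n)^{\alpha}$ and $\eta^n + 1$ are bounded by a single finite constant $C = C(\eta)$ for all $n$. Hence $|x_i^n(s)| \le C$ uniformly in $n$, $i$, and $s \in [0,t]$. For each $l \in \{2,\dots,d\}$, the coefficient $|1/(\eta^n)^{l-1} - 1/\eta^{l-1}|$ tends to zero by continuity of $\eta \mapsto \eta^{-(l-1)}$ on $[1,\infty)$. A term-by-term estimate on the finite sum then yields $\vn{g^{\eta^n}(x_i^n) - g^{\eta}(x_i^n)}_{t} \to 0$, and this case does not even require $i \le i_0$.

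In the case $\eta = \infty$, we have $g^{\eta} \equiv 0$, so it suffices to show $\vn{g^{\eta^n}(x_i^n)}_{t} \to 0$. Here the key observation is that $i^*_n = \tfrac{\alpha}{2}\log_\rho \eta^n \to \infty$, so for the fixed $i_0$ and all sufficiently large $n$, every $i \le i_0$ satisfies $i \le i^*_n$, and the hypothesis supplies the sharper bound $|x_i^n(s)| \le (\eta^n)^{\alpha}$. Plugging this into each term of the polynomial expansion gives
\[\frac{|x_i^n(s)|^{l}}{(\eta^n)^{l-1}} \le (\eta^n)^{\,l\alpha - (l-1)} = (\eta^n)^{\,1 - l(1-\alpha)}.\]
Since $\alpha < 1/2$ and $l \ge 2$, we have $l(1-\alpha) \ge 2(1-\alpha) > 1$, so the exponent is strictly negative and each term tends to zero as $\eta^n \to \infty$. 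Summing over the finitely many $l$ finishes the case.

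The only subtle point—which I would flag as the main (though still mild) obstacle—is that in the $\eta = \infty$ case, the weaker hypothesis $|x_i^n(s)| \le \eta^n + 1$ that holds a priori for $i > i^*_n$ is insufficient: $(\eta^n + 1)^l / (\eta^n)^{l-1}$ does not tend to zero. This is precisely why the lemma restricts to $i \le i_0$ for a fixed $i_0$; the quantifier ordering ensures that $i^*_n$ eventually surpasses $i_0$, letting the sharper bound $(\eta^n)^\alpha$ take effect. Making this dependence explicit (first fix $i_0$, then choose $n$ large) is the only care required in writing out the argument.
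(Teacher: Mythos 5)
Your proof is correct and follows essentially the same route as the paper's: split on $\eta<\infty$ versus $\eta=\infty$, use the polynomial expansion of $g^\eta$, bound $|x_i^n|$ by $\eta^n+1$ (or a uniform constant) in the first case and by $(\eta^n)^\alpha$ in the second (valid for $i\le i_0$ once $i_0<i^*_n$), and observe that each term has exponent $1-l(1-\alpha)<0$. Your remark explaining why the restriction $i\le i_0$ is needed matches the paper's implicit reasoning and is a nice clarification.
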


\begin{proof}
First suppose $\eta < \infty$. Then for large enough $n$ we have $\eta_n <
\infty$.
For $i \ge 0$ we have
\begin{align*}
\vn{g^{\eta^n}(x^n_i)-g^{\eta}(x^n_i)}_t &= \sup_{0 \le s \le
t}{1 \over d}\left|\sum_{l = 2}^d\binom{d}{l}(-1)^l x^n_i(s)^l
\left(\left(\eta^{n}\right)^{1-l}-\eta^{1-l}\right)\right| \\
&\le {1 \over d}\sum_{l = 2}^d\binom{d}{l}
\left(\eta^{n}+1\right)^l\left|\left(\eta^{n}\right)^{1-l}-\eta^{1-l}\right|
\defeq C_n.
\end{align*}
Observe that as $\eta^n \to \eta$, $C_n \to 0$, as desired.

Suppose now $\eta = \infty$ and thus $g^\eta = 0$. For $0 \le i \le i^*_n$
we have
\begin{align*}
\vn{g^{\eta^n}(x^n_i)-g^{\eta}(x^n_i)}_t 
&= \vn{g^{\eta^n}(x^n_i)}_t \\
&\le {1 \over d}\sum_{l =2}^d\binom{d}{l}\left(\eta^{n}\right)^{\alpha l} 
\left(\eta^{n}\right)^{1-l} \\
&=  {1 \over d}\sum_{l =
2}^d\binom{d}{l}\left(\eta^{n}\right)^{1-l(1-\alpha)}.
\end{align*}
As $\eta^n \to \infty$ we have $i^*_n \to \infty$ and thus for large enough 
$n$, $i_0 < i^*_n$.
For $2 \le l \le d$ we have
\[\left(\eta^{n}\right)^{1-l(1-\alpha)} \to 0,\]
and thus
\[\vn{g^{\eta^n}(x^n_i)}_t \to 0.\] 
\end{proof}

\begin{lemma}\label{thm:g^m-lipschitz}
For every $\eta \in \R_{\ge 1}$, $g^\eta$ is a Lipschitz
continuous function with constant $4^d$ on $D^{\eta+2}_t$ 
equipped with the topology induced by $\vn{\cdot}_{\rho,t}$.
That is, for $x^1,x^2 \in D^{\eta+2}_t$, and for $i \ge 0$,
\[\vn{g^\eta(x^1_i)- g^\eta(x^2_i)}_{t} 
\le 4^d \vn{x^1_i-x^2_i}_{t}\]
and
\[\vn{g^\eta(x^1)-g^\eta(x^2)}_{\rho,t}
\le 4^d\vn{x^1-x^2}_{\rho,t}.\]
\end{lemma}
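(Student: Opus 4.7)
The plan is to reduce everything to a pointwise Lipschitz estimate on the scalar function $g^\eta$, obtained by a direct application of the mean value theorem, and then lift this bound first to the uniform norm $\vn{\cdot}_t$ and finally to the weighted norm $\vn{\cdot}_{\rho,t}$ by summation.

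First I would compute the derivative of $g^\eta$ from the closed form \eqref{eq:g^n}, obtaining $(g^\eta)'(x) = 1 - (1-x/\eta)^{d-1}$. On the interval $[-\eta - 2, \eta + 2]$, for $\eta \ge 1$ we have $\left|1 - x/\eta\right| \le 1 + (\eta+2)/\eta = 2 + 2/\eta \le 4$. Consequently $|(g^\eta)'(x)| \le 1 + 4^{d-1}$, and since $1 + 4^{d-1} \le 4^d$ for every $d \ge 1$, the mean value theorem yields the pointwise bound $|g^\eta(a) - g^\eta(b)| \le 4^d\,|a - b|$ for all $a,b \in [-\eta-2,\eta+2]$.

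The first displayed inequality of the lemma then follows by applying this pointwise bound coordinatewise. Since $x^1,x^2 \in D^{\eta+2}_t$ means $x^1_i(s), x^2_i(s) \in [-\eta-2,\eta+2]$ for every $i$ and every $s \in [0,t]$, the pointwise bound gives $|g^\eta(x^1_i(s)) - g^\eta(x^2_i(s))| \le 4^d |x^1_i(s) - x^2_i(s)|$, and taking the supremum over $s \in [0,t]$ on both sides produces $\vn{g^\eta(x^1_i) - g^\eta(x^2_i)}_t \le 4^d \vn{x^1_i - x^2_i}_t$. The second inequality is then immediate: multiply by $\rho^{-i}$, sum over $i \ge 0$, and use the definition of $\vn{\cdot}_{\rho,t}$ to obtain $\vn{g^\eta(x^1) - g^\eta(x^2)}_{\rho,t} \le 4^d \vn{x^1-x^2}_{\rho,t}$.

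There is no real obstacle here; the only point to watch is that the derivative bound must be uniform in $\eta \ge 1$, which is why the slightly enlarged domain $[-\eta-2,\eta+2]$ (rather than, e.g., $[-\eta,\eta]$) is harmless: the bound $|1 - x/\eta| \le 4$ still holds because of the hypothesis $\eta \ge 1$. The constant $4^d$ is almost certainly not tight, but it is clean and more than sufficient for later use (for example in Gronwall-type arguments via Lemma~\ref{thm:gronwall}).
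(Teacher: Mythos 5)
Your proof is correct and follows essentially the same approach as the paper's: bound $\sup_{y \in [-\eta-2,\eta+2]}|(g^\eta)'(y)|$ by $4^d$, apply the mean value theorem for a pointwise Lipschitz estimate, and then lift to the uniform and weighted norms. The paper states the derivative bound without spelling out the arithmetic; your verification that $|1 - y/\eta| \le 2 + 2/\eta \le 4$ for $\eta \ge 1$ and hence $|(g^\eta)'(y)| \le 1 + 4^{d-1} \le 4^d$ is exactly the computation being suppressed.
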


\begin{proof}
Consider the restriction of $g^\eta$ onto $[-\eta-2,\eta+2] \to \R$.
This function is differentiable and thus is Lipschitz continuous with constant
\begin{align*}
\sup_{y \in [-\eta-2,\eta+2]}\left|\dot{g}^\eta(y)\right| &=
\sup_{y \in [-\eta-2,\eta+2]}\left|-\left(1-{y \over
\eta}\right)^{d-1} + 1\right| 
\le 4^d.
\end{align*}
Thus for $y^1,y^2 \in [-\eta-2,\eta+2]$
we have $|g^\eta(y^1)-g^\eta(y^2)| \le 4^d|y^1-y^2|$, which further implies
for $x^1,x^2 \in D^{\eta+2}_t$ and for $i \ge 0$ we have
\begin{align*}
\vn{g^\eta(x^1_i)- g^\eta(x^2_i)}_t &= 
\sup_{0 \le s \le t}|g^\eta(x_i^1(s)) - g^\eta(x_i^2(s))| \\
&\le 4^d\sup_{0 \le s \le t}|x_i^1(s)-x_i^2(s)| \\
&\le 4^d\vn{x_i^1-x_i^2}_t. 
\end{align*}
This further implies
\begin{align*}
\vn{g^\eta(x^1)-g^\eta(x^2)}_{\rho,t} &= \sum_{i \ge
0}\rho^{-i}\vn{g^\eta(x^1_i)- g^\eta(x^2_i)}_t \\
&\le \sum_{i \ge 0}\rho^{-i}4^d\vn{x^1_i-x^2_i}_t\\
&= 4^d\vn{x^1-x^2}_{\rho,t}.
\end{align*}
\end{proof}

\begin{proof}[Proof of Theorem \ref{thm:const-integral}: Existence and
uniqueness]
Fix $(b,y,\lambda,\eta) \in \R^{\infty,\rho} \times D^{\infty,\rho}_t
\times \R \times \Rbar_{\ge 1}$.

Suppose first $T(0) = b + y(0) \not\in \bounded{\eta}$. Then $t^* = 0$ and
$\hat{T}(t) = T(0)$ for all $t \ge 0$.

We now suppose $T(0) = b + y(0) \in \bounded{\eta}$, and therefore
for all $1 \le i \le i^*$ we have
$|T_i(0)| \le \eta^{\alpha}$ and for all $i > i^*$ we have $|T_i(0)| \le
\eta + 1$.
We will show existence and uniqueness via a contraction mapping argument,
showing that the map defined by the right hand side of
\eqref{eq:integral-0}-\eqref{eq:integral-i} is a
contraction for small enough $t$. Note that this contraction argument will use the unbounded $\rho$-norm, and uniqueness of the solution with respect to that topology implies uniqueness with respect to the product topology.

We first define the map
$\Gamma:D^{\infty,\rho}_t \to D^\infty_t$, where for $x \in D^{\infty,\rho}_t$, 
\begin{align*}
\Gamma(x)_0(t) &= 0, \\
\Gamma(x)_i(t) &= b_i + y_i(t) - \lambda d\int_0^t\left(x_i(s) - x_{i-1}(s) -
g^\eta(x_i)
+ g^\eta(x_{i-1})\right)ds \\
&\quad\quad + \int_0^t\left(x_{i+1}(s) -
x_i(s)\right)ds, \quad\quad i \ge 1.
\end{align*}
Now let
\begin{align*}
t^* \defeq \inf\bigg\{ s \ge 0 : 
&\exists\text{ } i \st 1 \le i \le i^* \text{ and }
\left|\Gamma(x)_i(s)\right| \ge \eta^{\alpha}  
\\
&\text{ or }  \\
&\exists\text{ } i \st i > i^* \text{ and } \left|\Gamma(x)_i(s)\right| \ge
\eta + 1  \bigg\}.
\end{align*}
and further define $\hat{\Gamma}:D^{\infty,\rho}_t \to D^\infty_t$ by
\[\hat{\Gamma}(x)_i(t) = \begin{cases}
\Gamma(x)_i(t) & t < t^* \\ \Gamma(x)_i(t^*) & t
\ge t^*,
\end{cases}\]
for all $i \ge 0$.

By construction, $\hat{\Gamma}(x)_i(t) \in [-\eta-1,\eta+1]$ for all $i \ge 0$,
and thus $\hat{\Gamma}: D^{\infty,\rho}_t \to D^{\eta+1}_t$.
Further note that if $\eta = \infty$, then $g^\eta = 0$ and
\begin{align*}
\vn{\hat{\Gamma}(x)}_{\rho,t} &\le \vn{b}_\rho + \vn{y}_{\rho,t} + 
\sum_{i \ge 1}\rho^{-i}\lambda d t\left(\vn{x_i}_t + \vn{x_{i-1}}_t\right)  \\
&\quad\quad + \sum_{i \ge 1}\rho^{-i} t\left(\vn{x_{i+1}}_t + \vn{x_i}_t\right) \\
&\le \vn{b}_\rho + \vn{y}_{\rho,t} 
+ \lambda d t\sum_{i \ge 1}\rho^{-i}\vn{x_{i-1}}_t \\
&\quad\quad + t(\lambda d + 1)\sum_{i \ge 1}\rho^{-i} \vn{x_i}_t \\
&\quad\quad + t\sum_{i \ge 1}\rho^{-i}\vn{x_{i+1}}_t.
\end{align*}
We bound each of these sums individually. Observe
\begin{align*}
  \sum_{i \ge 1}\rho^{-i}\vn{x_{i-1}}_t &= 
    \rho^{-1}\sum_{i \ge 1}\rho^{-(i-1)}\vn{x_{i-1}}_t  \\
&= \rho^{-1}\sum_{i \ge 0}\rho^{-i}\vn{x_{i}}_t = \rho^{-1}\vn{x}_{\rho,t}.
\end{align*}
Similarly,
\[\sum_{i \ge 1}\rho^{-i} \vn{x_i}_t \le \sum_{i \ge 0}\rho^{-i}\vn{x_i}_t = \vn{x}_{\rho,t},\]
and
\begin{align*}
\sum_{i \ge 1}\rho^{-i}\vn{x_{i+1}}_t &= \rho \sum_{i \ge 1}\rho^{-(i+1)}\vn{x_{i+1}}_t \\
&= \rho \sum_{i \ge 2}\rho^{-i}\vn{x_i}_t \\
&\le \rho \sum_{i \ge 0}\rho^{-i}\vn{x_i}_t = \rho \vn{x}_t.
\end{align*}
Therefore we have
\[\vn{\hat{\Gamma}(x)}_{\rho,t} \le t\left(\lambda d\rho^{-1} + \lambda d + 1 + \rho\right) \vn{x}_{\rho,t} < \infty,\]
and thus $\hat{\Gamma} : D^{\infty,\rho}_t \to D^{\eta + 1,\rho}_t$.

We now show that for
\[t_0 < {1 \over \lambda d(1+\rho^{-1})(1+4^d) + 1 +
\rho},\]
$\hat{\Gamma}$ is a contraction on $D^{\eta+1,\rho}_{t}$ for all $t \le
t_0$.
Namely, we claim that there exists $\gamma < 1$ such that
for all $t \in [0,t_0]$ and $x^1,x^2 \in D^{\eta+1,\rho}_t$,
we have
\begin{equation}\label{eq:contraction-claim}
    \vn{\hat{\Gamma}(x^1)-\hat{\Gamma}(x^2)}_{\rho,t} \le
\gamma \vn{x^1-x^2}_{\rho,t}.
\end{equation}
Let $t \le t_0$ and $x^1,x^2 \in D^{\eta+1,\rho}_t$. 
We have for $i \ge 1$
\begin{align*}
\vn{\hat{\Gamma}(x^1)_i-\hat{\Gamma}(x^2)_i}_t
&\le \lambda d\int_0^t \bigg(
\vn{x_i^1-x_i^2}_s + \vn{x_{i-1}^1-x_{i-1}^2}_s \\
&\quad\quad\quad + \vn{g^\eta(x^1_i) - g^\eta(x^2_i)}_s 
+ \vn{g^\eta(x^1_{i-1}) - g^\eta(x^2_{i-1})}_s\bigg) ds \\
&\quad\quad + \int_0^t\left(\vn{x^1_{i+1}-x^2_{i+1}}_s +
\vn{x^1_i-x^2_i}_s\right)ds. \\
\end{align*}
By Lemma \ref{thm:g^m-lipschitz}, for $\eta < \infty$,
$g^\eta$ is Lipschitz when restricted to
$D^{\eta+2,\rho}_t$, with constant $4^d$. For $\eta = \infty$,
$g^\infty = 0$. Thus we now have
\begin{align*}
\vn{\hat{\Gamma}(x^1)_i-\hat{\Gamma}(x^2)_i}_t &\le t \lambda
d(1+4^d) \vn{x^1_{i-1}-x^2_{i-1}}_t \\
&\quad\quad + t\left(\lambda d(1+4^d) + 1\right) \vn{x^1_i-x^2_i}_{\rho,t}
+t \vn{x^1_{i+1}-x^2_{i+1}}_t.
\end{align*}
This implies
\begin{align*}
 \vn{\hat{\Gamma}(x^1)- \hat{\Gamma}(x^2)}_{\rho,t} &\le t\lambda
d(1+4^d) \sum_{i \ge 1}\rho^{-i} \vn{x^1_{i-1}- x^2_{i-1}}_{t} \\
&\quad\quad + t\left(\lambda d(1+4^d) + 1\right) \sum_{i \ge
1}\rho^{-i}\vn{x^1_i-x^2_i}_t \\
&\quad\quad + t\sum_{i \ge 1}\rho^{-i}\vn{x^1_{i+1}-x^2_{i+1}}_t.
\end{align*}
Reindexing and bounding these sums individually gives us
\begin{equation}\label{eq:contraction-bound}
     \vn{\hat{\Gamma}(x^1)- \hat{\Gamma}(x^2)}_{\rho,t} \le 
t\left(\lambda d(1 + \rho^{-1})(1+4^d) + 1 + \rho\right)
\vn{x^1-x^2}_{\rho,t}.
\end{equation}
Let 
\[t_0 < {1 \over \lambda d(1 + \rho^{-1})(1+4^d) + 1 + \rho}.\]
Then \eqref{eq:contraction-claim} holds with 
\[\gamma = t_0 \left(\lambda d(1 + \rho^{-1})(1+4^d) + 1 + \rho\right).\]
By the contraction mapping principle, $\hat{\Gamma}$ has a unique fixed point
$\hat{T}$ on $D^{\eta+1,\rho}_{t}$ such that $\hat{\Gamma}(\hat{T}) =
\hat{T}$. This fixed point provides a unique solution $\hat{T}$ to
\eqref{eq:integral-stopped} for $t \in [0,t_0]$. 

Suppose this fixed solution $\hat{T}$ is such that $t^* < t_0$. Then $\hat{T}$
is uniquely defined for all $t \ge 0$ and the proof is complete. Otherwise,
observe for $t \ge 0$ and $i \ge 1$ we have
\begin{align*}
    T_i(t) 
&= T_i(t_0) + y_i(t)-y_i(t_0) \\
&\quad\quad - \lambda d\int_{t_0}^{t}\left(T_i(s) - T_{i-1}(s) -
g^\eta(T_i(s)) + g^\eta(T_{i-1}(s)) \right)ds  \\
&\quad\quad + \int_{t_0}^{t}\left(T_{i+1}(s) - T_i(s)\right)ds.
\end{align*}
Thus if we define a shifted version of $y$ by $\tilde{y}(u) = y(u + t_0)$, then
$T_i(t)$ for $t = u + t_0$ and $u \ge 0$ is the solution to the system
\begin{align*}
x_0(u) &= 0 \\
x_i(u) &= T_i(t_0) - y_i(t_0) + \tilde{y}_i(u) \\
&\quad\quad - \lambda d\int_{0}^{u}\left(x_i(s) - x_{i-1}(s) -
g^\eta(x_i(s)) + g^\eta(x_{i-1}(s)) \right)ds  \\
&\quad\quad + \int_{0}^{u}\left(x_{i+1}(s) - x_i(s)\right)ds, \quad i \ge 1.
\end{align*}
Observe that this is the system \eqref{eq:integral-0}-\eqref{eq:integral-i}
with arguments
\[(T(t_0) - y(t_0), \tilde{y}, \lambda, \eta) \in \R^{\infty,\rho} \times
D^{\infty,\rho}_t \times \R \times \Rbar_{\ge 1},\]
and furthermore $x(0) = T(t_0) - y(t_0) + \tilde{y}(0) = T(t_0)$ and $T(t_0)
\in \bounded{\eta}$ because $t^* \ge t_0$. 
Thus we can repeat the contraction argument above to find
a unique solution $x$ for $u \in [0,t_0]$. This unique $x$ is the unique
solution $\hat{T}$ for $t \in [t_0,2t_0]$. If $t^* < 2t_0$, then $\hat{T}$ is 
uniquely defined for all $t \ge 0$ and the proof is complete.
Otherwise, the above extension argument can be repeated to find a unique
solution $\hat{T}$ for $[2t_0,3t_0],[3t_0,4t_0], \ldots$. If $t^* < k t_0$ for
some $k \ge 3$ then the argument stops there and we conclude $\hat{T}$ is
uniquely defined for all $t \ge 0$. Otherwise it may be extended to any $t \ge
0$.
\end{proof}

Before proving continuity, we state and prove a lemma bounding the growth 
of solutions to \eqref{eq:integral-stopped}.
\begin{lemma}\label{thm:uniform-bound}
For any $t \ge 0$, if $x$ is the solution to \eqref{eq:integral-stopped} for
arguments $(b,y,\lambda,\eta)$, then
\[\vn{x}_{\rho,t} \le \left(\vn{b}_\rho + \vn{y}_{\rho,t}\right) 
e^{\left(\lambda d(1+4^d)\left(1 + \rho^{-1}\right) + 1 + \rho\right)t}\]    
\end{lemma}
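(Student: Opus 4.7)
The plan is to derive the inequality via Gronwall (Lemma \ref{thm:gronwall}) applied to $g(s) = \vn{x}_{\rho,s}$, using the same reindexing tricks that appeared in the contraction argument. First I would dispose of the easy cases. If $t^* = 0$, then $\hat{T}(s) = b + y(0)$ for all $s \ge 0$, so $\vn{x}_{\rho,t} \le \vn{b}_\rho + \vn{y}_{\rho,t}$ and the bound holds trivially. If $t \ge t^*$, then $x(t) = x(t^*)$, so $\vn{x}_{\rho,t} = \vn{x}_{\rho,t^*}$; since the right-hand side of the claimed bound is non-decreasing in $t$, it suffices to treat $t \le t^*$. In this regime, $x$ satisfies the unstopped system \eqref{eq:integral-0}-\eqref{eq:integral-i}, and by Remark \ref{rem:eta=infty}, for $\eta < \infty$ we have $x \in D^{\eta+1}_t \subset D^{\eta+2}_t$, which places us in the Lipschitz domain of Lemma \ref{thm:g^m-lipschitz}.

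Next I would use the crucial fact that $g^\eta(0) = 0$ (immediate from \eqref{eq:g^n} or from the $l \ge 2$ expansion). Combined with Lemma \ref{thm:g^m-lipschitz}, this yields the pointwise bound $|g^\eta(x_i(s))| \le 4^d |x_i(s)|$ for $\eta < \infty$, while for $\eta = \infty$ the $g^\eta$ terms vanish identically and the same bound holds vacuously. Taking absolute values in \eqref{eq:integral-i}, applying the triangle inequality, and passing to the sup over $[0,t]$ gives, for each $i \ge 1$,
\[\vn{x_i}_t \le |b_i| + \vn{y_i}_t + \lambda d(1+4^d)\int_0^t\!\left(\vn{x_i}_s + \vn{x_{i-1}}_s\right)ds + \int_0^t\!\left(\vn{x_{i+1}}_s + \vn{x_i}_s\right)ds.\]
Multiplying by $\rho^{-i}$, summing over $i \ge 1$, and exchanging sum and integral by Tonelli (everything is non-negative), I would then apply the reindexing bounds $\sum_{i \ge 1}\rho^{-i}\vn{x_{i-1}}_s \le \rho^{-1}\vn{x}_{\rho,s}$, $\sum_{i \ge 1}\rho^{-i}\vn{x_i}_s \le \vn{x}_{\rho,s}$, and $\sum_{i \ge 1}\rho^{-i}\vn{x_{i+1}}_s \le \rho\vn{x}_{\rho,s}$, exactly as in the contraction argument leading to \eqref{eq:contraction-bound}. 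This produces
\[\vn{x}_{\rho,t} \le \vn{b}_\rho + \vn{y}_{\rho,t} + C\int_0^t \vn{x}_{\rho,s}\,ds, \qquad C = \lambda d(1+4^d)(1+\rho^{-1}) + 1 + \rho.\]

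Finally, for a fixed target time $t$, I would invoke Lemma \ref{thm:gronwall} with $\epsilon = \vn{b}_\rho + \vn{y}_{\rho,t}$ and $M = C$; this is legitimate because the inequality above remains valid with $\vn{y}_{\rho,s}$ replaced by the larger quantity $\vn{y}_{\rho,t}$ for all $s \le t$, so $g(s) = \vn{x}_{\rho,s}$ satisfies the Gronwall hypothesis on $[0,t]$ with the constant $\epsilon$. The conclusion $g(t) \le \epsilon e^{Ct}$ is exactly the desired bound. The only non-mechanical point is ensuring that $\vn{x}_{\rho,s}$ is finite so that Tonelli and Gronwall apply, but this is guaranteed by the contraction construction of $\hat T$ in $D^{\infty,\rho}_t$ (or $D^{\eta+1,\rho}_t$); the rest is routine.
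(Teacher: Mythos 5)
Your proposal is correct and follows essentially the same route as the paper's proof: bound $\vn{x_i}_t$ term by term from the integral equation, use $g^\eta(0)=0$ together with the Lipschitz bound from Lemma~\ref{thm:g^m-lipschitz} to replace $\vn{g^\eta(x_i)}_s$ by $4^d\vn{x_i}_s$, sum with $\rho^{-i}$ weights, reindex, and apply Gronwall. You are slightly more careful than the paper about the edge cases $t^*=0$ and $t\ge t^*$ and about the fact that Gronwall requires a uniform constant $\epsilon = \vn{b}_\rho+\vn{y}_{\rho,t}$ on $[0,t]$, but these are exactly the technicalities the paper leaves implicit, not a change of method.
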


\begin{proof}
   We have
\begin{align*}
\vn{x_i}_{t} &\le |b_i| + \vn{y_i}_t + \lambda d\int_0^t\left(\vn{x_{i-1}}_s + \vn{x_i}_s + \vn{g(x_{i-1})}_s + \vn{g(x_i)}_s\right) ds \\
&\quad\quad + \int_0^t\left(\vn{x_i}_s + \vn{x_{i+1}}_s\right)ds,    
\end{align*} 
and thus
\begin{align*}
\vn{x}_{\rho,t} &\le \vn{b}_\rho + \vn{y}_{\rho,t} 
+ \lambda d(1+4^d)\sum_{i \ge 1}\rho^{-i}\int_0^t\left( \vn{x_{i-1}}_s + \vn{x_i}_s\right)ds \\
&\quad\quad+ \sum_{i \ge 1}\rho^{-i} \int_0^t\left(\vn{x_i}_s + \vn{x_{i+1}}_s\right)ds \\
&\le  \vn{b}_\rho + \vn{y}_{\rho,t} + 
\left(\lambda d(1+4^d)\left(1 + \rho^{-1}\right) + 1 + \rho\right)
\sum_{i \ge 0}\rho^{-i}\int_0^t\vn{x_i}_sds \\
&= \vn{b}_\rho + \vn{y}_{\rho,t} + 
\left(\lambda d(1+4^d)\left(1 + \rho^{-1}\right) + 1 + \rho\right)
\int_0^t\sum_{i \ge 0}\rho^{-i}\vn{x_i}_sds \\
&= \vn{b}_\rho + \vn{y}_{\rho,t} + 
\left(\lambda d(1+4^d)\left(1 + \rho^{-1}\right) + 1 + \rho\right)
\int_0^t\vn{x}_{\rho,s}ds.
\end{align*}
By Gronwall's inequality (Lemma \ref{thm:gronwall}), we have
\[\vn{x}_{\rho,t} \le \left(\vn{b}_\rho + \vn{y}_{\rho,t}\right) 
e^{\left(\lambda d(1+4^d)\left(1 + \rho^{-1}\right) + 1 + \rho\right)t}.\]
\end{proof}

\begin{proof}[Proof of Theorem \ref{thm:const-integral}: continuity]
We now prove continuity for $f$ restricted to the domain $\region{K}$ for any $K > 0$. 
Suppose $(b^n,y^n,\lambda^n,\eta^n) \to
(b,y,\lambda,\eta)$ with respect to the product topology, 
with $(b^n,y^n,\lambda^n,\eta^n) \in \region{K}$ and since
the set $\region{K}$ is closed, we also have
$(b,y,\lambda,\eta) \in \region{K}$.
Suppose $x^n$ is the unique solution to \eqref{eq:integral-stopped} for
$(b^n,y^n,\lambda^n,\eta^n)$ and  $x$ is the unique solution for
$(b,y,\lambda,\eta)$. 
Let $i^*_n = {\alpha \over 2} \log_\rho \eta_n$ and $i^* = {\alpha \over 2}
\log_\rho \eta$. Let $t^*_n$ and $t^*$ be the stopping times for $x^n$ and $x$,
respectively.

Recall that by the definition of $\region{K}$, we have $x^n(0) = b^n + y^n(0)
\in \bounded{\eta^n}$.
Note that this, along with the definition of $t^*_n$, implies that 
$x^n(t) \in \bounded{\eta^n}$ for any $t \ge 0$.
Similarly, we have $x(t) \in \bounded{\eta}$ for any $t \ge 0$.
We adopt the simplified notation $g^n \defeq g^{\eta^n}$ and $g = g^\eta$. 

We will show $x^n \to x$ in $D^{\infty,\rho}_t$ equipped with the product topology. Fix $\epsilon > 0$ and $i_0 \in \N$.

Recall the map $\hat{\Gamma}$ defined in the proof of existence and uniqueness above. We define $\hat{\Gamma}^n$ and $\hat{\Gamma}$ analogously for $(b^n,y^n,\lambda^n,\eta^n)$ and $(b,y,\lambda,\eta)$, respectively. Define
\[t_0 = {1 \over 2\left(1+\lambda\right)d(1+\rho^{-1})(1+4^d) + 1 + \rho},\]
and
\[\gamma = t_0 \left(\left(1+\lambda\right)d(1+\rho^{-1})(1+4^d) + 1 + \rho\right) = 1/2.\]
By \eqref{eq:contraction-bound}, for $0 \le t \le t_0$ and $x^1,x^2 \in D^{\infty,\rho}_t$, we have
\begin{align*}
\vn{\hat{\Gamma}(x^1)-\hat{\Gamma}(x^2)}_{\rho,t} &\le 
t_0\left(\left(1+\lambda\right)d(1+\rho^{-1})(1+4^d) + 1 + \rho\right) 
    \vn{x^1-x^2}_{\rho,t} \\
&= \gamma \vn{x^1-x^2}_{\rho,t}.
\end{align*}
Furthermore, because $\lambda^n \to \lambda$, there exists some $N_\lambda$ such that $\lambda^n < \lambda + 1$ for $n \ge N_\lambda$, and thus for such $n$,
\begin{align*}
    \vn{\hat{\Gamma}^n(x^1)-\hat{\Gamma}^n(x^2)}_{\rho,t} &\le t_0 \left(\lambda^n d(1+\rho^{-1})(1+4^d) + 1 + \rho\right) \vn{x^1-x^2}_{\rho,t} \\
&\le  t_0 \left(\left(1+\lambda\right)d(1+\rho^{-1})(1+4^d) + 1 + \rho\right) \vn{x^1-x^2}_{\rho,t} \\
&= \gamma \vn{x^1-x^2}_{\rho,t}.
\end{align*}
Thus for $0 \le t \le t_0$, $\hat{\Gamma}$ and $\hat{\Gamma}^n$ for 
$n \ge N_\lambda$ are contractions on the space $D^{\infty,\rho}_t$ with
coefficient $\gamma = 1/2$. Recall that $x$ and $x^n$ are the fixed points
of $\hat{\Gamma}$ and $\hat{\Gamma}^n$, respectively, and that therefore
each can be found by repeated iteration of an arbitrary point in 
$D^{\infty,\rho}_t$. Specifically, we define
\begin{align*}
    x^0 &= 0 \quad &x^{n,0} &= 0 \\
    x^r &= \hat{\Gamma}(x^{r-1}) \quad &x^{n,r} &= 
        \hat{\Gamma}^n(x^{n,r-1}), \quad r \ge 1.
\end{align*}
Then the following inequalities hold:
\begin{align}
\vn{x - x^r}_{\rho,t} &\le {\gamma^n \over 1 - \gamma} \vn{x^1-x^0}_{\rho,t}
 = 2^{-r+1}\vn{x^1}_{\rho,t}, \notag\\ 
\vn{x^n - x^{n,r}}_{\rho,t} &\le {\gamma^n \over 1 - \gamma} 
\vn{x^{n,1}-x^{n,0}}_{\rho,t} = 2^{-r+1}\vn{x^{n,1}}_{\rho,t}. 
\label{eq:contraction-n}
\end{align}
Observe
\begin{align*}
    \vn{x^1}_{\rho,t} &= \vn{\hat{\Gamma}(0)}_{\rho,t} = 
\vn{b + y}_{\rho,t} \le \vn{b}_{\rho} + \vn{y}_{\rho,t} \le 2K,
\end{align*}
and similarly
\[\vn{x^{n,1}}_{\rho,t} \le \vn{b^n}_{\rho} + \vn{y^n}_{\rho,t} \le 2K.\]

We now argue that there exists $N$ such that for all $n \ge N$ and 
$i \le i_0$, $\vn{x_i^n - x_i}_t < \epsilon$ for $t \le t_0$. This will
establish continuity of $f$ with respect to the product topology 
for such $t$. Observe
\begin{equation}\label{eq:continuity-breakdown}
\vn{x_i^n - x_i}_{t} \le \vn{x_i^n - x_i^{n,r}}_t + \vn{x_i^{n,r} - x_i^r}_t 
+ \vn{x_i^r - x_i}_t.
\end{equation}
We bound these three terms individually. By \eqref{eq:contraction-n}, 
we have
\begin{align*}
    \vn{x_i^n - x_i^{n,r}}_t &\le \rho^i\vn{x^n-x^{n,r}}_{\rho,t} \\
&\le 2^{-r+1}\rho^i\left(\vn{b^n}_\rho + \vn{y^n}_{\rho,t}\right) \\
&\le 2^{-r+2}K\rho^{i_0}.
\end{align*}
For
\begin{equation}\label{eq:r-bound}
r > 2 + \log_2{3\rho^{i_0}K \over \epsilon},
\end{equation}
we have
\begin{equation}\label{eq:bounded-nr}
  \vn{x_i^n - x_i^{n,r}}_t < \epsilon / 3 \quad \text{ for all } i \le i_0.  
\end{equation}
Via a similar argument, for $r$ satisfying \eqref{eq:r-bound}, we also have
\begin{equation}\label{eq:bounded-limitr}
  \vn{x_i - x_i^r}_t < \epsilon / 3 \quad \text{ for all } i \le i_0.  
\end{equation}
Finally, we consider
\begin{align*}
\vn{x_i^{n,r+1} - x_i^{r+1}}_t &\le |b_i^n - b_i| + \vn{y_i^n - y_i}_t \\
&\quad\quad + \lambda^n d \int_0^t \big(\vn{x^{n,r}_{i-1} - x^r_{i-1}}_s 
+ \vn{x^{n,r}_i - x^r_i}_s \\
&\quad\quad\quad + \vn{g^n(x^{n,r}_{i-1}) - g(x^r_{i-1})}_s 
+ \vn{ g^n(x^{n,r}_i) - g(x^r_i)}_s \big)ds \\
&\quad\quad + |\lambda - \lambda^n|
d\int_0^t\big(\vn{x^r_{i-1}}_s + \vn{x^r_i}_s \\
&\quad\quad\quad + \vn{g(x^r_{i-1})}_s + \vn{g(x^r_i)}_s\big)ds \\
&\quad\quad + \int_0^t\left(\vn{x^{n,r}_{i} - x^r_i}_s +
\vn{x^{n,r}_{i+1}-x^r_{i+1}}_s\right)ds.   
\end{align*}
By Lemma \ref{thm:g^m-lipschitz}, $g$ is Lipschitz continuous when 
restricted to $D^{\eta+2,\rho}_t$, so we have
\begin{align*}
\vn{g(x^r_{i-1})}_s + \vn{g(x^r_i)}_s &\le 4^d\vn{x^r_{i-1}}_s + 
4^d\vn{x^r_i}_s.
\end{align*}
Further note the bound
\[\vn{g^n(x_i^{n,r}) - g(x_i^r)}_s \le \vn{g^n(x_i^{n,r}) - g(x_i^{n,r})}_s
+ \vn{g(x_i^{n,r}) - g(x_i^r)}_s.\]
Using these pieces, crudely bounding integrals for some terms, 
and rearranging we have
\begin{align*}
\vn{x_i^{n,r+1} - x_i^{r+1}}_t &\le |b_i^n - b_i| + \vn{y_i^n - y_i}_t \\
&\quad\quad + t\vn{g^n(x^{n,r}_{i-1}) - g(x^{n,r}_{i-1})}_t 
+ t\vn{g^n(x^{n,r}_{i}) - g(x^{n,r}_{i})}_t \\
&\quad\quad + |\lambda - \lambda^n|t(1+4^d)\left(\vn{x^r_{i-1}}_t + \vn{x^r_i}_s\right) \\
&\quad\quad + \lambda^n d \int_0^t 
\left(\vn{x^{n,r}_{i-1} - x^r_{i-1}}_s 
+ \vn{x^{n,r}_i - x^r_i}_s \right)ds \\
&\quad\quad + \lambda^n d \int_0^t 
\left(\vn{g(x^{n,r}_{i-1}) - g(x^r_{i-1})}_s 
+ \vn{g(x^{n,r}_i) - g(x^r_i)}_s \right)ds \\
&\quad\quad + \int_0^t\left(\vn{x^{n,r}_{i} - x^r_i}_s +
\vn{x^{n,r}_{i+1}-x^r_{i+1}}_s\right)ds.   
\end{align*}
Recall that for any $t \ge 0$ we have $x^{n,r}(t) \in \bounded{\eta^n}$
and thus
$\left|x^{n,r}_i(t)\right| \le \left(\eta^n\right)^{\alpha}$
for $i \le i^*_n$, and $\left|x^{n,r}_i(t)\right| \le \eta^n +1$ for $i >
i^*_n$, so the conditions of Lemma \ref{thm:g^m-converge} are satisfied and
therefore for $i \le i_0$ we have 
$\vn{g^n(x_{i}^{n,r}) - g(x_i^{n,r})}_t \to 0$, and similarly 
$\vn{g^n(x_{i-1}^{n,r}) - g(x_{i-1}^{n,r})}_t \to 0$. This, along with 
$(b^n,y^n,\lambda^n) \to (b,y,\lambda)$ implies that for any $\delta > 0$ we 
can choose $N_\delta$ such that for $n \ge N_\delta$ we have
\[\lambda_n \le \lambda + 1,\]
\[\eta^n \le \eta + 1,\]
and for all $i \le i_0 + r + 1$,
\begin{align*}
 &|b_i^n - b_i| + \vn{y_i^n - y_i}_t \\
&\quad + t\vn{g^n(x^{n,r}_{i-1}) - g(x^{n,r}_{i-1})}_t 
+ t\vn{g^n(x^{n,r}_{i}) - g(x^{n,r}_{i})}_t \\
&\quad + |\lambda - \lambda^n|t(1+4^d)\left(\vn{x^r_{i-1}}_t 
    + \vn{x^r_i}_t\right) \quad\quad < \delta.
\end{align*}
Observe that $\eta^n \le \eta + 1$ implies $x_i^{n,r} \in D^{\eta + 2,\rho}_t$,
so
\[\vn{g(x_i^{n,r}) - g(x_i^r)}_s \le 4^d\vn{x_i^{n,r} - x_i^r}_s.\]
For $n \ge N_\delta$, we have for all $i \le i_0 + r + 1$
\begin{align*}
\vn{x_i^{n,r+1} - x_i^{r+1}}_t &< \delta + 
\int_0^t\left(\vn{x^{n,r}_{i} - x^r_i}_s + 
\vn{x^{n,r}_{i+1}-x^r_{i+1}}_s\right)ds \\
&\quad\quad + (1+4^d)(1+\lambda) d \int_0^t 
\left(\vn{x^{n,r}_{i-1} - x^r_{i-1}}_s 
+ \vn{x^{n,r}_i - x^r_i}_s \right)ds.
\end{align*}
We rewrite this as
\begin{align*}
\vn{x_i^{n,r+1} - x_i^{r+1}}_t &< \delta + 
C
\int_0^t \max_{i-1 \le j \le i+1}\vn{x^{n,r}_j - x^r_j}_s ds,
\end{align*}
where $C = \left(2(1+4^d)(1+\lambda) d + 2\right)$. For $i \le i_0$, 
this can be expanded as
\begin{align*}
  \vn{x_i^{n,r+1} - x_i^{r+1}}_t &< \delta + Ct \max_{i-1 \le j \le i+1}\vn{x^{n,r}_j - x^r_j}_t \\
&< \delta + Ct\left(\delta + 
Ct\max_{i-2 \le j \le i+2}\vn{x^{n,r-1}_j - x^{r-1}_j}_t\right) \\
&< \delta \sum_{k = 0}^{r}(Ct)^k + (Ct)^{r+1}\max_{(i - r-1) \wedge 0 \le j \le i+r+1}\vn{x_j^{n,0} - x^0_j}_t
\end{align*}
Recall that $x^{n,0} = x^0 = 0$, so for all $i \ge 0$ we have 
$\vn{x_i^{n,0} - x_i^0}_t = 0$, and thus
\begin{align*}
    \vn{x_i^{n,r+1} - x_i^{r+1}}_t < \delta {(Ct)^{r+1}-1 \over Ct - 1}.
\end{align*}
Reindexing gives, for all $i \le i_0$,
\[ \vn{x_i^{n,r} - x_i^{r}}_t < \delta {(Ct)^{r}-1 \over Ct - 1},\]
and thus for 
\[\delta < {\epsilon \over 3} \cdot {Ct - 1 \over (Ct)^{r}-1 },\]
we have
\begin{equation}\label{eq:bounded-diff}
    \vn{x_i^{n,r} - x_i^{r}}_t < \epsilon / 3.
\end{equation}
Thus by plugging \eqref{eq:bounded-nr},\eqref{eq:bounded-limitr}, and 
\eqref{eq:bounded-diff} into \eqref{eq:continuity-breakdown}, if we choose $r$ and $n$ such that 
\[r > 2 + \log_2{3\rho^{i_0}K \over \epsilon}, \quad \text{ and } \quad 
n \ge \max\left(N_\lambda,N_\delta\right),\]
we have for all $i \le i_0$,
\[\vn{x_i^n - x_i}_t < \epsilon,\]
establishing the continuity of $f$ for $t \le t_0$.

As in the proof of existence and uniqueness, we define a shifted version of
$y$ by $\tilde{y}(u) = y(u+t_0)$ observe that $x(t)$ for $t = u + t_0$ and
$u \ge 0$ is the solution $\hat{z}$ to the system
\begin{align*}
z_0(u) &= 0 \\
z_i(u) &= x_i(t_0) - y_i(t_0) + \tilde{y}_i(u) \\
&\quad\quad - \lambda d\int_{0}^{u}\left(x_i(s) - x_{i-1}(s) -
g^\eta(x_i(s)) + g^\eta(x_{i-1}(s)) \right)ds  \\
&\quad\quad + \int_{0}^{u}\left(x_{i+1}(s) - x_i(s)\right)ds, \quad i \ge 1 \\
\hat{z}(u) &= \begin{cases} z(u) & u < u^* \\ z(u^*) & u \ge u^*, \end{cases} 
\end{align*}
where $u^*$ is defined analogously to $t^*$ in 
\eqref{eq:stopping-time-full}.
Observe that this is the system 
\eqref{eq:integral-0}-\eqref{eq:integral-i},
\eqref{eq:integral-stopped} with arguments
\[(x(t_0) - y(t_0), \tilde{y}, \lambda, \eta) \in \R^{\infty,\rho} \times
D^{\infty,\rho}_t \times \R \times \Rbar_{\ge 1},\]
and furthermore $\hat{z}(0) = x(t_0) - y(t_0) + \tilde{y}(0) = x(t_0) 
\in \bounded{\eta}$. Also 
$\vn{\tilde{y}}_{\rho,u} \le \vn{y}_{\rho,u+t_0} < K$ 
and by Lemma \ref{thm:uniform-bound} we have
\begin{align*}
 \vn{x(t_0)-y(t_0)}_{\rho,t} &\le \vn{y(t_0)}_\rho + \vn{x(t_0)}_\rho \\
&\le K + \left(\vn{b}_\rho + \vn{y}_{t_0,\rho}\right)
e^{\left(\lambda d(1+4^d)\left(1 + \rho^{-1}\right) + 1 + \rho\right)t_0} \\
&\le K + 
2Ke^{\left((1+\lambda)d(1+4^d)\left(1 + \rho^{-1}\right) 
+ 1 + \rho\right)t_0}.
\end{align*} 
We define
\[K_1 \defeq K + 
2Ke^{\left((1+\lambda)d(1+4^d)\left(1 + \rho^{-1}\right) 
+ 1 + \rho\right)t_0},\]
so
\[(x(t_0) - y(t_0), \tilde{y}, \lambda, \eta) \in \region{K_1}.\]
 A similar construction allows us to define 
$\tilde{y}^n$ and $\hat{z}^n$, and we have $\tilde{y}^n \to \tilde{y}$ in 
$D^{\infty,\rho}_u$ for any $u \ge 0$. 
For sufficiently large $n$ we have $\lambda^n < \lambda + 1$, and thus 
\begin{align*}
 \vn{x^n(t_0)-y^n(t_0)}_{\rho,t} &\le K + 
2Ke^{\left(\lambda^nd(1+4^d)\left(1 + \rho^{-1}\right) 
+ 1 + \rho\right)t_0} \\
&\le K + 
2Ke^{\left((1+\lambda)d(1+4^d)\left(1 + \rho^{-1}\right) 
+ 1 + \rho\right)t_0} = K_1,
\end{align*}
so 
\[(x^n(t_0) - y^n(t_0),\tilde{y}^n,\lambda^n,\eta^n) \in \region{K_1}.\]
Therefore we can repeat the continuity argument above to show 
$\hat{z}^n \to \hat{z}$ for $u \le t_0$, which implies 
$x^n \to x$ for $t \in [t_0,2t_0]$. 
This extension argument
can be repeated to prove $x^n \to x$ for $[2t_0,3t_0],[3t_0,4t_0], \ldots$.
Thus for any $t \ge 0$ we have $x^n \to x$ in $D^{\infty,\rho}_t$ and thus
$f$ is continuous.
\end{proof}

\section{Martingale representation.}\label{sec:martingales}
We now show that the stochastic process underlying the supermarket system
stopped at some appropriate time can be
written in a form that exactly matches that of $\hat{T}$ in 
\eqref{eq:integral-stopped}. 
This will allow us to use Theorem \ref{thm:const-integral} 
to prove Theorem \ref{thm:const-result} in the next section.

Before introducing the stopped variant, we will consider the original 
supermarket system and show that it can be represented by the equations
\eqref{eq:integral-0}-\eqref{eq:integral-i} for a particular choice of
arguments $(b,y,\lambda, \eta)$. 

For $i \ge 1$, recall the representation \eqref{eq:s-form-i}.
Given the definition $T_i^n = \eta_n\left(1 - S_i^n\right)$ we
can rewrite this as
\begin{align}
T_i^n(t) &= T_i^n(0) - {\eta_n \over n}A_i\left(\lambda_n n
\int_0^t\left(\left(1 - {1 \over \eta_n}T_{i-1}^n(s)\right)^d-\left(1 - 
{1 \over \eta_n}T_i^n(s)\right)^d\right)ds\right) \notag\\
&\quad\quad + {\eta_n \over n} D_i\left( n \int_0^t\left(\left(1-{1 \over
\eta_n}T_i^n(s)\right) - \left(1 -
{1\over\eta_n}T_{i+1}^n(s)\right)\right)ds \right) \notag\\
&= T_i^n(0) - {\eta_n \over n}A_i\bigg(\lambda_n n
\int_0^t\bigg(\left(1 - {d \over
\eta_n}T_{i-1}^n(s) + {d \over \eta_n} g^{\eta_n}(T_{i-1}^n(s))\right) \notag\\
&\hspace{0.25\textwidth} -\left(1 - {d
\over \eta_n}T_i^n(s) + {d \over \eta_n}
g^{\eta_n}(T_i^n(s))\right)\bigg)ds\bigg)
\notag\\
&\quad\quad + {\eta_n \over n} D_i\left( {n \over \eta_n}
\int_0^t\left(T_{i+1}^n(s) - T_{i}^n(s)\right)ds \right)
\notag\\
&= T_i^n(0) - {\eta_n \over n}A_i\left(\lambda_n d
{n \over \eta_n}\int_0^t\left(T_{i}^n(s) - T_{i-1}^n(s) - g^{\eta_n}(T_i^n(s))
+ g^{\eta_n}(T_{i-1}^n(s))\right)ds\right) + \notag\\
&\quad\quad + {\eta_n \over n}D_i\left({n \over \eta_n}
\int_0^t\left(T_{i+1}^n(s) - T_i^n(s)\right)ds\right),
\label{eq:T-form-i-no-martingales}
\end{align}
where $g^{\eta_n}$ is defined as in \eqref{eq:g^n}. We now define scaled
martingale processes
\begin{align}
M_i^n(t) &= {\eta_n \over n}A_i\left(\lambda_n d
{n \over \eta_n} \int_0^t\left(T_{i}^n(s) - T_{i-1}^n(s) + g^{\eta_n}(T_i^n(s))
- g^{\eta_n}(T_{i-1}^n(s))\right)ds\right) \notag\\
&\quad\quad - \lambda_n d
\int_0^t\left(T_{i}^n(s) - T_{i-1}^n(s) + g^{\eta_n}(T_i^n(s)) -
g^{\eta_n}(T_{i-1}^n(s))\right)ds, \label{eq:martingale-arrival}\\
N_i^n(t) &= {\eta_n \over n}D_i\left({n \over \eta_n}\int_0^t\left(T_{i+1}^n(s)
- T_{i}^n(s)\right)ds\right) - \int_0^t\left(T_{i+1}^n(s) -
T_{i}^n(s)\right)ds. \label{eq:martingale-departure}
\end{align}
Now we can rewrite the system for $i \ge 1$ as
\begin{align}
T_i^n(t) &=	T_i^n(0) - M_i^n(t) - \lambda_n d\int_0^t\left(T_i^n(s) -
T_{i-1}^n(s) + g^{\eta_n}(T_i^n(s)) - g^{\eta_n}(T_{i-1}^n(s)\right)ds \notag\\
&\quad\quad + N_i^n(t) + \int_0^t\left(T_{i+1}^n(s) - T_i^n(s)\right)ds.
\label{eq:T-form-i}
\end{align}
This representation matches \eqref{eq:integral-i} with $b = T^n(0)$, $y = -M^n
+ N^n$, $\lambda = \lambda_n$ and $\eta = \eta_n$. 

Recall that the assumptions of Theorem \ref{thm:const-result} include constants
$\rho > 1$ and $0 < \alpha < 1/2$.
We now define $i^*_n = {\alpha \over 2} \log_\rho \eta_n$ and define a stopping
time
\[t^*_n = \inf\left\{t \ge 0 : \exists\text{ } i \st 1 \le i \le i^*_n
\text{ and } T_i^n(t) \ge \eta_n^{\alpha}\right\}.\]
Note that compared to \eqref{eq:stopping-time-full}, this stopping time does
not contain terms checking $T_i^n(t) \le -(\eta_n)^{\alpha}$, or
$\left|T_i^n(t)\right| \ge \eta_n + 1$. This is because
$0 \le T_i^n(t) \le \eta_n$ for all $i \ge 0$ and for all $t \ge 0$, so such
conditions are never met. Thus $t^*_n$ is equivalent to the stopping time
defined by \eqref{eq:stopping-time-full}.
We consider the process $\hat{T}^n$ defined by
\[\hat{T}_i^n(t) = \begin{cases} T_i^n(t) & t < t^*_n \\ T_i^n(t^*_n) & t
\ge t^*_n.\end{cases}\]

As noted above, the stopped supermarket model $\hat{T}^n$ is the unique
solution of the
integral equation system \eqref{eq:integral-stopped} described in Theorem
\ref{thm:const-integral}, with arguments $b = T^n(0)$, $y = -M^n + N^n$,
$\lambda = \lambda_n$, and $\eta = \eta_n$.

\section{Martingale convergence.}\label{sec:martingale-convergence}
Because our sequence of supermarket models indexed by $n$ are all examples of
the integral equation system \eqref{eq:integral-stopped}, and Theorem
\ref{thm:const-integral} shows that this system defines a continuous map from
arguments $(b,y,\lambda,\eta) \in \region{K}$ for some $K > 0$
to the stopped system
$f(b,y,\lambda,\eta) = \hat{T}$, we will find the weak limit of the finite
system $\hat{T}^n$ by finding the limits of the arguments $(T^n(0), -M^n + N^n, \lambda_n, \eta_n)$.
Though we do not use the continuous mapping theorem because we do not have
$(T^n(0), -M^n + N^n, \lambda_n, \eta_n) \in \region{K}$ almost surely for any non-random $K$,
the proof will still rely on the continuity of $f$ and the limits of the arguments.

Three of these limits are given, as
\eqref{eq:const-starting-state} provides $\hat{T}^n(0) \Rightarrow T(0)$,
and we have $\lambda_n \to 1$ and $\eta_n \to \infty$. 
We claim $-M^n + N^n \Rightarrow 0$. 
We now prove the following:
\begin{prop}\label{thm:martingale-limit}
    For $M^n$ and $N^n$ as defined in
\eqref{eq:martingale-arrival}-\eqref{eq:martingale-departure}, if
the assumptions of Theorem \ref{thm:const-result} hold, then
\[\E \vn{M^n}_{\rho,t}, \E \vn{N^n}_{\rho,t} \to 0 \quad \text{ as } n \to \infty.\]
This implies $M^n, N^n \Rightarrow 0$ in $D^{\infty,\rho}_t$ equipped with the product topology
\end{prop}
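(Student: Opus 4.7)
The plan is to exploit that $M_i^n(t)$ and $N_i^n(t)$ are scaled compensated time-changed unit-rate Poisson processes, hence martingales with predictable quadratic variations
\[\langle M_i^n\rangle_t=\frac{\eta_n^2\lambda_n}{n}\int_0^t\left[\left(S_{i-1}^n(s)\right)^d-\left(S_i^n(s)\right)^d\right]ds,\]
and $\langle N_i^n\rangle_t=\frac{\eta_n^2}{n}\int_0^t[S_i^n(s)-S_{i+1}^n(s)]\,ds$. The crucial structural fact is that these integrands are non-negative and telescope over $i$, giving $\sum_{i\ge 1}\langle M_i^n\rangle_t\le\eta_n^2\lambda_n t/n$ and $\sum_{i\ge 1}\langle N_i^n\rangle_t\le\eta_n^2 t/n$. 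Doob's $L^2$ maximal inequality followed by Jensen's inequality reduces the problem to controlling $\sum_i\rho^{-i}\sqrt{\E\langle M_i^n\rangle_t}$ and the analogous sum for $N^n$.

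A crude Cauchy-Schwarz against the telescoping estimate only yields $\E\vn{M^n}_{\rho,t}=O(\eta_n/\sqrt{n})$, which is merely bounded under \eqref{eq:eta-rate}. To extract vanishing, I would split the sum at $i_n^*=\frac{\alpha}{2}\log_\rho\eta_n$. For $i\le i_n^*$, the stopping time $t_n^*$ from Section~\ref{sec:martingales} enforces $T_i^n(s)\le\eta_n^\alpha$ on $[0,t_n^*]$. Combined with the identity
\[\left(S_{i-1}^n\right)^d-\left(S_i^n\right)^d=\frac{d}{\eta_n}\left(T_i^n-T_{i-1}^n+g^{\eta_n}(T_{i-1}^n)-g^{\eta_n}(T_i^n)\right),\]
and the estimate $|g^{\eta_n}(x)|=O(x^2/\eta_n)$ for $|x|\le\eta_n^\alpha$ with $\alpha<1/2$, the integrand of $\langle M_i^n\rangle$ is of order $\eta_n^{\alpha-1}$, so $\E\langle M_i^n\rangle_{t\wedge t_n^*}=O(\eta_n^{1+\alpha}/n)\to 0$ since $\alpha<1$ and $\eta_n\le Q\sqrt{n}$. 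Since $\sum_{i\le i_n^*}\rho^{-i}$ remains bounded, the head of the series vanishes. For $i>i_n^*$, Cauchy-Schwarz yields
\[\sum_{i>i_n^*}\rho^{-i}\sqrt{\E\langle M_i^n\rangle_t}\le\sqrt{\sum_{i>i_n^*}\rho^{-i}}\cdot\sqrt{\E\sum_{i\ge 1}\langle M_i^n\rangle_t}=O\!\left(\eta_n^{-\alpha/4}\cdot\eta_n/\sqrt{n}\right),\]
which again tends to zero under \eqref{eq:eta-rate}.

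The main obstacle is that the proposition concerns the raw, unstopped martingales, whereas the low-index control genuinely requires $t_n^*$. I would resolve this by decomposing
\[\E\vn{M^n}_{\rho,t}=\E\left[\vn{M^n}_{\rho,t}\mathbf{1}\{t_n^*>t\}\right]+\E\left[\vn{M^n}_{\rho,t}\mathbf{1}\{t_n^*\le t\}\right].\]
On $\{t_n^*>t\}$ the stopped and unstopped trajectories coincide and the bounds above apply. On $\{t_n^*\le t\}$, Cauchy-Schwarz together with the uniform bound $\E\vn{M^n}_{\rho,t}^2=O(1)$ (from the crude estimate above) reduces matters to showing $\PP(t_n^*\le t)\to 0$. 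This is the delicate step and is to be established by a bootstrap/Gronwall argument using the integral representation \eqref{eq:T-form-i}, the initial-state assumptions \eqref{eq:starting-expectation-bounded} and \eqref{eq:start-inside}, and an induction keeping $T^n$ inside $\bounded{\eta_n}$ with high probability. The identical scheme applied to $N^n$, using $\sum_i(S_i^n-S_{i+1}^n)\le 1$, gives $\E\vn{N^n}_{\rho,t}\to 0$. Weak convergence to zero in the product topology on $D^{\infty,\rho}_t$ follows since $\E\vn{M^n}_{\rho,t}\to 0$ forces $\E\vn{M_i^n}_t\to 0$ for each fixed $i$, hence coordinatewise convergence in probability.
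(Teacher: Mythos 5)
Your overall architecture --- quadratic variation, Doob's $L^2$ maximal inequality, telescoping over $i$, splitting at $i_n^*$, and conditioning on a high-probability trajectory event --- is sound and parallels the paper's strategy, though you go through $\langle M_i^n\rangle_t$ and Doob whereas the paper reduces to a time-changed compensated Poisson process and applies the FCLT to $\sup_u|D_i(u)-u|$. The genuine problem is the step you yourself flag as ``delicate'': you invoke $\PP(t_n^*\le t)\to 0$ but leave it as a to-do, asserting it requires a ``bootstrap/Gronwall argument'' and ``an induction keeping $T^n$ inside $\bounded{\eta_n}$ with high probability.'' That is not a proof, and as written it risks circularity: if you try to get the Gronwall estimate from the integral representation \eqref{eq:T-form-i} you need the martingale terms to be small, which is what you are trying to prove.

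In fact no bootstrap is needed, and you are over-estimating the difficulty. The paper's Lemma~\ref{thm:const-expectation-finite} gives $\E[\vn{T^n}_{\gamma,t}]\le\E[\vn{T^n(0)}_\gamma]e^{\gamma t}$ by dropping the negative (arrival/departure) terms in the raw Poisson representation \eqref{eq:T-form-i-no-martingales} --- i.e.\ by comparison with a pure-birth process --- so the bound is unconditional and does not refer to the martingales at all. Combined with \eqref{eq:starting-expectation-bounded} and a Markov/union-bound step this gives both the bound on $\PP(t_n^*\le t)$ that you need and, more directly, the paper's event $A$: rather than stop the process, the paper defines $A=\{\exists\,i\le i_n^*+1:\vn{T_i^n}_t\ge\delta_n^{-1}C\rho^{i_n^*+1}\}$, shows $\PP(A)\to 0$ via Markov on $\E\vn{T_i^n}_t\le C\rho^{i_n^*+1}$, and then splits $\E\vn{N_i^n}_t$ over $A$ and $A^c$. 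This avoids ever coupling the proposition to $t_n^*$ (which the paper only controls later, in the proof of Theorem~\ref{thm:const-result}). To repair your argument, replace the unproved claim with a citation to this elementary Gronwall bound (or reprove it by the same drop-negative-terms trick) and the rest of your estimates go through.

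One smaller inaccuracy: the ``uniform bound $\E\vn{M^n}_{\rho,t}^2=O(1)$ (from the crude estimate above)'' does not follow from your first-moment telescoping estimate $\E\vn{M^n}_{\rho,t}=O(\eta_n/\sqrt n)$; you need a separate Cauchy--Schwarz in $i$ plus Doob's $L^2$ inequality to reduce $\E\vn{M^n}_{\rho,t}^2$ to $\sum_i\rho^{-i}\E\langle M_i^n\rangle_t$. That fix is routine but should be stated rather than attributed to the wrong estimate.
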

Before proving this proposition, we prove a bound on the unbounded
$\rho$-norm of $T^n$ in expectation.
\begin{lemma}\label{thm:const-expectation-finite}
For any $\gamma > 1$ we have
\begin{equation}\label{eq:const-expectation-finite}
\E\left[\vn{T^n}_{\gamma,t}\right] \le \E\left[\vn{T^n(0)}_\gamma\right]
e^{\gamma t}.
\end{equation}
\end{lemma}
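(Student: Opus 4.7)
The plan is to exploit the fact that $T_i^n \ge 0$ and that the upward jumps of $T_i^n$ come from a single Poisson process (the departure process) with an explicit intensity that telescopes nicely under the $\gamma^{-i}$ weighting. Specifically, starting from the Poisson representation \eqref{eq:s-form-i} rewritten in terms of $T_i^n$, I would drop the arrival (downward) term to obtain, for $i \ge 1$ and every $s \in [0,t]$,
\[T_i^n(s) \le T_i^n(0) + {\eta_n \over n}D_i\left(n\int_0^s \left(S_i^n(u) - S_{i+1}^n(u)\right)du\right).\]
Because the right-hand side is non-decreasing in $s$ (the integrand is non-negative since $S_i^n \ge S_{i+1}^n$, and $D_i$ is a counting process), the sup over $s \in [0,t]$ is attained at $s = t$, which gives a sup norm bound in a single shot with no martingale maximal inequality required.

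The next step is to take expectations. Since $D_i$ is an independent rate-$1$ Poisson process and the time change is predictable, the optional sampling identity yields
\[\E\,{\eta_n \over n}D_i\left(n\int_0^t (S_i^n-S_{i+1}^n)du\right) = \int_0^t \E\!\left[\eta_n(S_i^n(u)-S_{i+1}^n(u))\right]du = \int_0^t \E\!\left[T_{i+1}^n(u)-T_i^n(u)\right]du.\]
Combined with $T_i^n \ge 0$, this gives the pointwise bound
\[\E\vn{T_i^n}_t \le \E T_i^n(0) + \int_0^t \E\vn{T_{i+1}^n}_u\,du.\]

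Now I would weight by $\gamma^{-i}$ and sum over $i \ge 0$ (the $i = 0$ term is trivially zero since $T_0^n \equiv 0$). Reindexing the resulting double sum,
\[\sum_{i \ge 0}\gamma^{-i}\E\vn{T_{i+1}^n}_u = \gamma\sum_{j \ge 1}\gamma^{-j}\E\vn{T_j^n}_u \le \gamma\,\E\vn{T^n}_{\gamma,u},\]
so Fubini gives the integral inequality
\[\E\vn{T^n}_{\gamma,t} \le \E\vn{T^n(0)}_{\gamma} + \gamma\int_0^t \E\vn{T^n}_{\gamma,u}\,du.\]
Because $T_i^n \le \eta_n$ deterministically, $\E\vn{T^n}_{\gamma,t}$ is finite, so Lemma \ref{thm:gronwall} (Gronwall) delivers \eqref{eq:const-expectation-finite}.

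The only delicate point is justifying the expectation identity for $D_i$ evaluated at the predictable (random) time $n\int_0^t (S_i^n - S_{i+1}^n)du$; this is standard (one can truncate at a deterministic upper bound $n t$ using $S_i^n - S_{i+1}^n \le 1$, apply the optional sampling theorem to the compensated Poisson martingale, and then remove the truncation, or invoke the strong Markov/random time-change framework of Ethier--Kurtz). Everything else is essentially linear manipulation and Gronwall, so I do not anticipate a substantive obstacle.
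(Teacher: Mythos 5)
Your proposal is correct and follows essentially the same route as the paper: drop the arrival term from the Poisson representation to get a monotone upper bound involving only $D_i$, use the compensator identity for the time-changed Poisson process to pass to $\int_0^t \E T_{i+1}^n\,du$, weight by $\gamma^{-i}$, reindex, and apply Gronwall. The only cosmetic difference is that the paper drops $T_i^n$ inside the $D_i$ argument before taking expectations while you keep $S_i^n - S_{i+1}^n$ and drop the $-T_i^n$ afterward; the resulting integral inequality is identical.
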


\begin{proof}
By dropping negative terms from \eqref{eq:T-form-i-no-martingales} we
have
\[T_i^n(t) \le T_i^n(0) + {\eta_n
\over n}D_i\left({n \over \eta_n}\int_0^tT_{i+1}^n(s)ds\right).\]
Then we have
\begin{align*}
\E\left[\vn{T_i^n}_t\right]  &\le \E T_i^n(0) +
\E\left[\int_0^tT_{i+1}^n(s)ds\right]
\\
&\le \E T_i^n(0) + \int_0^t\E\left[\vn{T_{i+1}^n}_s\right]ds.
\end{align*}
This implies
\begin{align*}
\E\left[\vn{T^n}_{\gamma,t}\right] &= \sum_{i \ge
1}\gamma^{-i}\E\left[\vn{T_i^n}_t\right] \\
&\le \E\left[\vn{T^n(0)}_\gamma\right] + \sum_{i \ge 1}
\gamma^{-i}\int_0^t\E\left[\vn{T_{i+1}^n}_s\right]ds \\
&\le \E\left[\vn{T^n(0)}_\gamma\right] +
\gamma\int_0^t\E\left[\vn{T^n}_{\gamma,s}\right]ds.
\end{align*}
We can now apply Lemma \ref{thm:gronwall} to conclude
\[\E\left[\vn{T^n}_{\gamma,t}\right] \le \E\left[\vn{T^n(0)}_\gamma\right]
e^{\gamma t},\]
as desired.
\end{proof}

\begin{proof}[Proof of Proposition \ref{thm:martingale-limit}]
We first prove the statement for $N^n$.
We begin with some observations about $N^n$ and introduce some additional
definitions. Let
\begin{equation}\label{eq:tau-N}
\tau^n_i \defeq \int_0^t\left(T_{i+1}^n(s) -
T_{i}^n(s)\right)ds, \quad i \ge 0.
\end{equation}
Observe for all $i \ge 0$
\begin{equation}\label{eq:tau-N-bound}
\tau^n_i \le t\vn{T_{i+1}^n}_t.
\end{equation}
Now observe
\begin{align}
    \vn{N^n_i}_t &= \sup_{0 \le u \le t}\left| {\eta_n \over n}D_i\left({n
\over \eta_n}\int_0^u\left(T_{i+1}^n(s)
- T_{i}^n(s)\right)ds\right) - \int_0^u\left(T_{i+1}^n(s) -
T_{i}^n(s)\right)ds\right| \notag\\
&= \sup_{0 \le s \le \tau^n_i}\left|{\eta_n \over n}D_i\left({n
\over \eta_n}s\right) - s\right| \notag\\
&= \sup_{0 \le u \le {n \tau^n_i \over \eta_n}} {\eta_n \over
n}\left|D_i(u)-u\right|. \label{eq:CLT-setup}
\end{align}

We claim
\begin{equation}\label{eq:kappa-exp-N-short}
\lim_{n \to \infty} \max_{1 \le i \le i^*_n}
\E\vn{N_i^n}_{t} = 0.  
\end{equation}
By Lemma \ref{thm:const-expectation-finite} with $\gamma = \rho$
we have
\[\E\left[\vn{T^n}_{\rho,t}\right] \le
\E\left[\vn{T^n(0)}_{\rho}\right] e^{\rho t}.\]
Recall \eqref{eq:starting-expectation-bounded}, fix some $\epsilon > 0$ and
let
\[C = \left(\limsup_{n \to
\infty}\E\left[\vn{T^n(0)}_{\rho}\right]+\epsilon\right) e^{\rho t}.\]
For all sufficiently large $n$ we have
\[\E\left[\vn{T^n}_{\rho,t}\right] \le C.\]
This further implies that for all $1 \le i \le i^*_n+1$ we have
\[\E\left[\vn{T_i^n}_t\right] \le C\rho^{ i} \le C\rho^{i^*_n+1}.\]
Define
\[\delta_n = {1 \over \left(\log_\rho \eta_n\right)^2},\]
and define the event
\[A = \left\{\exists \text{ } i \le i^*_n+1 \st \vn{T_i^n}_t \ge {1
\over \delta_n}C\rho^{i^*_n+1}\right\}.\]
By Markov's inequality and the union bound
\begin{equation}\label{eq:T-small-small-i}
    \PP\left(A\right) \le \delta_n (i^*_n+1) = {\alpha \over 2\log_\rho
\eta_n}
+ {1 \over \left(\log_\rho \eta_n\right)^2}.
\end{equation}
Observe
\begin{align}
     \E\left[\vn{N_i^n}_{t}\right] &=
\E\left[\vn{N_i^n}_t
\1\{A\}\right] + \E\left[\vn{N_i^n}_t \1\{A^c\}\right].
\label{eq:breakdown}
\end{align}
We bound these two terms separately. 
We first consider the second term of \eqref{eq:breakdown}. Recall
\eqref{eq:CLT-setup} and observe
\begin{equation}\label{eq:CLT-setup-A^c}
\vn{N_i^n}_t\1\{A^c\} \le \sup_{0 \le u \le {n \tau^n_i \over
\eta_n}}
{\eta_n \over n}\left|D_i(u)-u\right|.
\end{equation}
For $0 \le i \le i^*_n$, $A^c$ and \eqref{eq:tau-N-bound} imply
\begin{align*}
\tau^n_i &\le {t \over \delta_n}C \rho^{i^*_n+1 } \\
&= t\rho C {\eta_n^{\alpha/ 2} \over \delta_n}.
\end{align*}
Applying this for \eqref{eq:CLT-setup-A^c} we obtain
\[\vn{N_i^n}\1\{A^c\} \le \sup{\eta_n \over
n}\left|D_i(u)-u\right|,\]
where the supremum is over
\[0 \le u \le t\rho C n \delta_n^{-1} \eta_n^{{\alpha \over 2} - 1}.\]
We define
\begin{align*}
\nu_n &\defeq n \delta_n^{-1} \eta_n^{{\alpha \over 2} - 1} \\
    &= {n \over \eta_n^{1-\alpha / 2}}\left(\log_\rho \eta_n \right)^2.
\end{align*}
Recall that $\eta_n = \bo{\sqrt{n}}$, so 
\[\eta_n^{1 - \alpha / 2} = \bo{n^{1/2-\alpha /4}},\]
 so $\nu_n \to \infty$ as $n \to \infty$. 
Thus we have
\begin{align*}
    \vn{N_i^n}\1\{A^c\} &\le \sup_{0 \le u \le t\rho C
\nu_n}{\eta_n
\over n}\left|D_i(u) - u\right| \\
&= {\eta_n \sqrt{\nu_n} \over n}\sup_{0 \le u \le t\rho C
\nu_n}{1 \over \sqrt{\nu_n}}\left|D_i(u) - u\right|.
\end{align*}
Let
\[\gamma_n \defeq {\eta_n \sqrt{\nu_n} \over n}\E\sup_{0 \le u \le
t\rho C \nu_n}{1 \over \sqrt{\nu_n}}\left|D_i(u) - u\right|.\]
Thus $\E\left[\vn{N_i^n}_t\1\{A^c\}\right] \le
\gamma_n$ for $1 \le i \le i^*_n$.
By the Functional Central Limit Theorem (FCLT), since $\nu_n \to \infty$ we
have
\[\sup_{0 \le u \le t\rho C
\nu_n}{1 \over \sqrt{\nu_n}}\left|D_i(u) - u\right| \Rightarrow \sup_{0 \le u
\le t\rho C} |B(u)|,\]
where $B$ is a standard Brownian motion.
Furthermore, observe that
\begin{align*}
    {\eta_n \sqrt{\nu_n} \over n} &= n^{-1/2}\eta_n^{1/2+
\alpha /4}\log_\rho\eta_n.
\end{align*}
By assumption we have $\eta_n = \bo{\sqrt{n}}$, so 
\begin{align*}
\eta_n^{1/2+\alpha/4} &= \bo{n^{1/4 + \alpha / 8}}. 
\end{align*}
Recall that $\alpha < 1/2$ so $1/4 + \alpha / 8 < 1/2$
which implies 
\[{\eta_n \sqrt{\nu_n}\over n} \to 0,\]
and thus $\gamma_n \to 0$.

We now consider the first term of \eqref{eq:breakdown}. 
By the Cauchy-Schwarz inequality and \eqref{eq:T-small-small-i}, we have
\begin{align*}
    \E\left[\vn{N_i^n}_t \1\{A\}\right] &\le
\sqrt{\E\left[\vn{N_i^n}_t^{2}\right]}\sqrt{\PP
(A)} 
\end{align*}
By \eqref{eq:tau-N-bound} and $T_i(t) \le \eta_n$ for all $i \ge 1$, we have
\begin{align*}
    \vn{N_i^n}_t &\le \sup_{0 \le u \le nt} 
{\eta_n \over n}\left|D_i(u)-u\right| \\
&= {\eta_n \over \sqrt{n}}\sup_{0 \le u \le nt} 
{1 \over \sqrt{n}}\left|D_i(u)-u\right|.
\end{align*}
Let
\begin{equation}\label{eq:w_n}
w_n \defeq  {\eta_n \over \sqrt{n}}\E\left[\sup_{0 \le u \le nt} 
{1 \over \sqrt{n}}\left|D_i(u)-u\right|\right],
\end{equation}
and
\[z_n \defeq  {\eta_n^2 \over n}\E\left[\left(\sup_{0 \le u \le nt} 
{1 \over \sqrt{n}}\left|D_i(u)-u\right|\right)^2\right].\]
Then we have, for all $i \ge 1$
\[\E\left[\vn{N_i^n}_t\right] \le w_n \quad \text{ and } \quad 
\E\left[\vn{N_i^n}_t^2\right] \le z_n.\]
By the FCLT, we have 
\[\sup_{0 \le u \le nt}{1 \over \sqrt{n}}\left|D_i(u) - u\right| \Rightarrow \sup_{0 \le u
\le t} |B(u)|,\]
where $B$ is a standard Brownian motion. We now have
\begin{align*}
 \E\left[\vn{N_i^n}_t \1\{A\}\right]  &\le \sqrt{z_n}
\sqrt{{\alpha  \over 2\log_\rho \eta_n} 
+ {1 \over \left(\log_\rho \eta_n\right)^2}}.
\end{align*}
Because $\eta_n = \bo{\sqrt{n}}$, as $n \to \infty$ we have
\[{\eta_n^2 \over n \log_\rho \eta_n} \to 0,\]
and thus
\begin{align*}
 \E\left[\vn{N_i^n}_t \1\{A\}\right]  &\le 
\sqrt{{z_n \alpha  \over 2\log_\rho \eta_n} 
+ {z_n \over \left(\log_\rho \eta_n\right)^2}} \to 0.
\end{align*}

Returning to \eqref{eq:breakdown}, we obtain
\begin{align*}
    \E\left[\vn{N_i^n}_t\right] &=
\E\left[\vn{N_i^n}_t\1\{A\}\right] +
\E\left[\vn{N_i^n}_t \1\{A^c\}\right] \\
&\le \sqrt{{z_n \alpha  \over 2\log_\rho \eta_n} 
+ {z_n \over \left(\log_\rho \eta_n\right)^2}} + \gamma_n,
\end{align*}
and thus
\[\lim_{n \to \infty} \max_{1 \le i \le i^*_n}
\E\left[\vn{N_i^n}_t\right]
 \le \lim_{n \to \infty} \sqrt{{z_n \alpha  \over 2\log_\rho \eta_n} 
+ {z_n \over \left(\log_\rho \eta_n\right)^2}} + \gamma_n = 0,\]
which establishes the claim \eqref{eq:kappa-exp-N-short}.

Now consider
\begin{align*}
\E\left[\vn{N^n}_{\rho,t}\right] &= \sum_{i \ge
1}\rho^{-i}\E\left[\vn{N_i^n}_t\right] \\
&\le \max_{1 \le i \le
i^*_n}\E\left[\vn{N_i^n}_t\right]\sum_{1
\le i \le i^*_n}\rho^{-i}
+ \sum_{i > i^*_n}\rho^{-i}\E\left[\vn{N_i^n}_t\right] \\
&\le {\rho \over \rho - 1}\max_{1 \le i \le
i^*_n}\E\left[\vn{N_i^n}_t\right]  + \sum_{i > i^*_n}\rho^{-i}w_n \\
&= {\rho \over \rho -1}\max_{1 \le i \le
i^*_n}\E\left[\vn{N_i^n}_t\right]  + {\rho^{-i
^*_n} w_n \over \rho -1}.
\end{align*}
Recall the definition of $w_n$ given by \eqref{eq:w_n} and observe that
\[{\eta_n \over \sqrt{n}}\rho^{-i^*} 
= {\eta_n^{1-\alpha/2} \over \sqrt{n}} \to 0,\]
as $n \to \infty$ implies the second term goes to zero. 
Recalling \eqref{eq:kappa-exp-N-short}, we conclude
\[\E\left[\vn{N^n}_{\rho,t}\right] \to 0.\]
This implies the convergence $N^n \to 0$ in $D^{\infty,\rho}_t$ 
equipped with the product topology.

The argument to show $\E\left[\vn{M^n}_{\rho,t}\right] \to 0$
is similar: we redefine $\tau^n_i$ as
\[\tau_i^n = \lambda_n d \int_0^t\left(T_{i}^n(s) - T_{i-1}^n(s) +
g^n(T_i^n(s)) - g^n(T_{i-1}^n(s))\right)ds.\]
By Lemma \ref{thm:g^m-lipschitz}, $g^{\eta_n}$ restricted to 
$D^{\eta_n+2}_t$ is Lipschitz continuous with constant $4^d$ and that
$\lambda_n \uparrow 1$, so we have the bound
\begin{align*}
\tau^i_n &\le td(1+4^d)\vn{T_i^n}_t + td4^d\vn{T_{i-1}^n}_t.
\end{align*}
This bound replaces \eqref{eq:tau-N-bound} and the rest of the argument
proceeds essentially identically to the $N^n$ case.
\end{proof}
We are now prepared to prove our main result:

\begin{proof}[Proof of Theorem \ref{thm:const-result}]
We first claim $\hat{T}^n \Rightarrow T$.
We established in Section \ref{sec:martingales} that 
$\hat{T}^n = f(T^n(0), -M^n+N^n, \lambda_n, \eta_n)$ where $f$ is the function
defined in Theorem \ref{thm:const-integral}. 
In Proposition \ref{thm:martingale-limit} we established $-M^n + N^n
\Rightarrow 0$. 
We prove $f(T^n(0), -M^n+N^n, \lambda_n, \eta_n) \Rightarrow f(T(0),0,1,\infty)$ 
directly. To do that, we choose a closed set $F \subset D_t^{\infty,\rho}$ and 
show that
\begin{align}\label{eq:convergence-claim}
\limsup_n \PP&\left(f(T^n(0), -M^n+N^n, \lambda_n, \eta_n) \in F\right) \notag \\
&\le \PP\left(f(T(0),0,1,\infty) \in F\right).
\end{align}
Our approach is to choose some large constant $K > 0$ and consider two cases.
Let $A_{K,n}$ be the event 
$\left\{\max\left(\vn{T^n(0)}_{\rho},\vn{-M^n+N^n}_{\rho,t}\right) > K\right\}$ 
and observe that assumption \eqref{eq:starting-expectation-bounded} and Proposition
\ref{thm:martingale-limit} imply
\[\limsup_n\PP\left(A_{K,n}\right) \defeq \epsilon_K \to 0 \quad \text{ as } \quad K \to \infty.\]
Let $Y^n = (T^n(0), -M^n+N^n, \lambda_n, \eta_n)$ and $Y = (T(0),0,1,\infty)$. Then
\begin{align*}
\limsup_n \PP(f(Y^n) \in F) &= \limsup_n \bigg(\PP\left(f(Y^n) \in F, A_{K,n}\right) \\
&\quad\quad\quad\quad +
\PP\left(f(Y^n) \in F, A_{K,n}^c\right)\bigg) \\
&\le \epsilon_K + \limsup_n\PP\left(f(Y^n) \in F, A_{K,n}^c\right).
\end{align*}
Recall \eqref{eq:martingale-arrival}-\eqref{eq:martingale-departure} and observe that
$M^n(0) = N^n(0) = 0$. Thus $A_{K,n}^c$ and assumption \eqref{eq:start-inside} imply 
$Y^n \in \region{K}$ for sufficiently large $n$.
For such $n$, we have
\begin{align*}
\PP\left(f(Y^n) \in F, A_{K,n}^c\right) 
&= \PP\left(Y^n \in f^{-1}(F), A_{K,n}^c \right) \\
&= \PP\left(Y^n \in f^{-1}(F) \cap \region{K}\right).
\end{align*}
Because both $F$ and $\region{K}$ are closed and $f$ is continuous on $\region{K}$, 
$ f^{-1}(F) \cap \region{K}$ is closed.
Thus the convergence $Y^n \Rightarrow x$ implies
\begin{align*}
\limsup_n\PP\left(Y^n \in f^{-1}(F) \cap \region{K} \right) 
&\le \PP\left(Y \in f^{-1}(F) \cap \region{K} \right) \\
&\le \PP\left(Y \in f^{-1}(F) \right) \\
&= \PP\left(f(Y) \in F \right).
\end{align*}
Thus
\[\limsup_n\PP\left(f(Y^n) \in F, A_{K,n}^c\right) 
\le \PP\left(f(Y) \in F\right),\]
and
\[\limsup_n \PP\left(f(Y^n) \in F\right) \le \epsilon_K + 
\PP\left(f(Y) \in F\right).\]
Taking the limit $K \to \infty$ on both sides establishes \eqref{eq:convergence-claim}.
Thus
\[\hat{T}^n \Rightarrow f(T(0),0,1,\infty).\]
By definition, we have $f(T(0),0,1,\infty) = T$. Thus we conclude
\[\hat{T}^n \Rightarrow T.\]

By construction, $\hat{T}^n(t)$ and $T^n(t)$ are identical for $t
\in [0,t_n^*]$. Thus it remains to show that for all $t \ge 0$, $\PP(t_n^*
\le t) \to 0$ as $n \to \infty$. Let 
\[p_n = \PP(t_n^* \le t) = 
\PP\left(\exists \text{ } i \le i^*_n \st \vn{T_i^n}_t \ge
\eta_n^{\alpha}\right).\]
We have
\begin{align}
\E\left[\vn{T^n}_{\rho,t}\right] &\ge \sum_{1 \le i \le i^*_n} \rho^{-i} \E\vn{T_i^n}_t \notag \\
&\ge \rho^{-i^*_n}\sum_{1 \le i \le i^*_n}\E\vn{T_i^n}_t \notag \\
&\ge \rho^{-i^*_n}\eta_n^\alpha p_n \notag\\
&= \rho^{-{\alpha \over 2}\log_\rho \eta_n }\eta_n^{\alpha} p_n \notag\\
&= \eta_n^{\alpha / 2} p_n, \label{eq:exp-lower-bound}
\end{align}
We have $\eta_n^{\alpha / 2} \to \infty$ as $n \to \infty$
By Lemma \ref{thm:const-expectation-finite} with $\gamma = \rho$ and assumption \ref{eq:starting-expectation-bounded}, 
we have $\lim_{n \to \infty}\E\vn{T^n}_{\rho,t} < \infty$, so \eqref{eq:exp-lower-bound} does not diverge to infinity,
which implies $p_n \to 0$. This implies $T^n \Rightarrow T$, as desired.
\end{proof}

\section{Open questions.}\label{sec:Conclusions}
In addition to the questions and conjectures already proposed in Section
\ref{sec:steady-state},
another potential future direction would be to characterize the detailed
behavior of
queues of length $\log_d \eta_n + \bo{1}$. 
Not only do we conjecture that in steady state
almost all queues are of this type, but we also expect that over finite time
the process tracking such queues converges to a
diffusion process rather than a deterministic system. The methods used in this
paper do not naturally translate to the ``intermediate length queue'' regime.

\bibliographystyle{plain}
\bibliography{supermarket}

\begin{thebibliography}{10}

\bibitem{bramson-2010}
M.~Bramson, Lu~Yi, and B.~Prabhakar.
\newblock Randomized load balancing with general service time distributions.
\newblock {\em Performance Evaluation Review}, 38(1):275 -- 286, 2010.

\bibitem{bramson-2012}
M.~Bramson, Lu~Yi, and B.~Prabhakar.
\newblock Asymptotic independence of queues under randomized load balancing.
\newblock {\em Queueing Systems, Theory and Applications}, 71(3):247 -- 292,
  2012.

\bibitem{bramson-2013}
M.~Bramson, Lu~Yi, and B.~Prabhakar.
\newblock Decay of tails at equilibrium for fifo join the shortest queue
  networks.
\newblock {\em Annals of Applied Probability}, 23(5):1841 -- 1878, 2013.

\bibitem{luczak}
G.~{Brightwell} and M.~{Luczak}.
\newblock {The supermarket model with arrival rate tending to one}.
\newblock {\em ArXiv e-prints}, January 2012.

\bibitem{dieker-suk}
A.~B. Dieker and T.~Suk.
\newblock Randomized longest-queue-first scheduling for large-scale buffered
  systems.
\newblock {\em Advances in Applied Probability}, 47(4):1015 -- 1038, 2015.

\bibitem{eschenfeldt}
P.~{Eschenfeldt} and D.~{Gamarnik}.
\newblock {Join the Shortest Queue with Many Servers. The Heavy Traffic
  Asymptotics}.
\newblock {\em ArXiv e-prints}, February 2015.

\bibitem{ethier-kurtz}
S.~Ethier and T.~Kurtz.
\newblock {\em Markov Processes: Characterization and Convergence}.
\newblock John Wiley \& Sons, Inc., 2008.

\bibitem{graham}
Carl Graham.
\newblock Functional central limit theorems for a large network in which
  customers join the shortest of several queues.
\newblock {\em Probability Theory \& Related Fields}, 131(1):97 -- 120, 2005.

\bibitem{li}
Quan-Lin Li and John~C.S. Lui.
\newblock Block-structured supermarket models.
\newblock {\em Discrete Event Dynamic Systems}, 26(2):147--182, 2016.

\bibitem{luczak-mcdiarmid}
Malwina~J. Luczak and Colin McDiarmid.
\newblock On the maximum queue length in the supermarket model.
\newblock {\em The Annals of Probability}, (2):493, 2006.

\bibitem{luczak-norris}
Malwina~J. Luczak and James Norris.
\newblock Strong approximation for the supermarket model.
\newblock {\em The Annals of Applied Probability}, (3):2038, 2005.

\bibitem{Mitzenmacher1999}
M.~Mitzenmacher.
\newblock On the analysis of randomized load balancing schemes.
\newblock {\em Theory of Computing Systems}, 32(3):361--386, 1999.

\bibitem{mitzenmacher}
M.~Mitzenmacher.
\newblock The power of two choices in randomized load balancing.
\newblock {\em Parallel and Distributed Systems, IEEE Transactions on},
  12(10):1094--1104, 2001.

\bibitem{mukherjee2015universality}
D~Mukherjee, SC~Borst, JSH van Leeuwaarden, and PA~Whiting.
\newblock Universality of load balancing schemes on diffusion scale.
\newblock {\em arXiv preprint arXiv:1510.02657}, 2015.

\bibitem{mukherjee2016efficient}
D~Mukherjee, SC~Borst, JSH van Leeuwaarden, and PA~Whiting.
\newblock Efficient load balancing in large-scale systems.
\newblock In {\em 2016 Annual Conference on Information Science and Systems
  (CISS)}, pages 384--389. IEEE, 2016.

\bibitem{vvedenskaya}
N.~D. Vvedenskaya, R.~L. Dobrushin, and F.~I. Karpelevich.
\newblock Queueing system with selection of the shortest of two queues: An
  asymptotic approach.
\newblock {\em Probl. Peredachi Inf.}, 32(1):20--34, 1996.

\bibitem{vvedenskaya-97}
N.~D. Vvedenskaya and Y.~M. Suhov.
\newblock Dobrushin's mean-field approximation for a queue with dynamic
  routing.
\newblock {\em Markov Processes and Related Fields}, 3(4):493--526, 1997.

\end{thebibliography}

\end{document}